\documentclass[a4paper,11pt]{amsart}
\usepackage{amsfonts}
\usepackage{amsthm}
\usepackage{amsmath}
\usepackage{amssymb}
\usepackage{graphicx}
\usepackage{enumitem}
\usepackage{color}
\usepackage{mathrsfs} 
\usepackage{stmaryrd}

\definecolor{sealbrown}{rgb}{0.2, 0.08, 0.08}
\usepackage[colorlinks=true]{hyperref}
\hypersetup{citecolor=blue, linkcolor=blue}

\theoremstyle{plain}
\newtheorem{problem}{Problem}
\newtheorem*{prob*}{Problem}
\theoremstyle{definition}
\newtheorem{defin}{Definition}[section]
\newtheorem{theorem}{Theorem}[section]
\newtheorem{proposition}{Proposition}[section]
\newtheorem{cor}{Corollary}[section]
\newtheorem{lemma}{Lemma}[section]
\theoremstyle{definition}
\newtheorem{example}{Example}[section]

\theoremstyle{remark}
\newtheorem{remark}[theorem]{Remark}

\newcommand{\adef}{\begin{defn}}
\newcommand{\zdef}{\end{defn}}
\newcommand{\diam}{\mathrm{diam}}
\newtheorem{defn}[theorem]{Definition}

\def\co{\operatorname{c}_0}

\newcommand{\aproof}{\begin{proof}}
\newcommand{\zproof}{\end{proof}}

\makeatletter
\def\@tocline#1#2#3#4#5#6#7{\relax
    \ifnum #1>\c@tocdepth 
    \else
    \par \addpenalty\@secpenalty\addvspace{#2}%
    \begingroup \hyphenpenalty\@M
    \@ifempty{#4}{%
        \@tempdima\csname r@tocindent\number#1\endcsname\relax
    }{%
        \@tempdima#4\relax
    }%
    \parindent\z@ \leftskip#3\relax \advance\leftskip\@tempdima\relax
    \rightskip\@pnumwidth plus4em \parfillskip-\@pnumwidth
    #5\leavevmode\hskip-\@tempdima
    \ifcase #1
    \or\or \hskip 1em \or \hskip 2em \else \hskip 3em \fi%
    #6\nobreak\relax
    \hfill\hbox to\@pnumwidth{\@tocpagenum{#7}}\par
    \nobreak
    \endgroup
    \fi}
\makeatother

\title[Asymptotically H\"older nonexpansive mappings]{Fixed point results for mappings of asymptotically H\"older nonexpansive type}

\author{C. S. Barroso}
\address{Departamento de Matem\'atica, Universidade Federal do Cear\'a, Campus do Pici, Bl 914
Fortaleza, CE 60455-900, Av Humberto Monte S/N Brazil}
\email{cleonbar@mat.ufc.br}

\author{C. S. R. da Silva}
\address{Departamento de Matem\'atica, Universidade Federal do Cear\'a, Campus do Pici, Bl 914
Fortaleza, CE 60455-900, Av Humberto Monte S/N Brazil}
\email{csergiomat@gmail.com}

\thanks{2020 Mathematics Subject Classification: 47H09, 47H10.}

\thanks{Keywords. Fixed point, asymptotically H\"older nonexpansive type mapping, characteristic of convexity.}

\thanks{This research has partially been supported by CAPES}

\begin{document}

\maketitle

\begin{abstract}
We introduce asymptotically H\"older nonexpansive mappings and mappings of asymptotically H\"older nonexpansive type, in which the H\"older exponents converge to one along the iterates. We prove that if \(K\) is a nonempty closed bounded convex subset of a Banach space \(X\) with characteristic of convexity \(\epsilon_0(X)<1\), then every self-mapping of \(K\) of asymptotically H\"older nonexpansive type admits a point \(x\in K\) such that \(T^n x\to x\). Consequently, a fixed point exists whenever some positive iterate of \(T\) is continuous; in particular, this applies to asymptotically H\"older nonexpansive mappings. We also obtain fixed-point-free constructions in spaces containing copies of \(c_0\), and a related construction on
a bounded convex, not necessarily closed, subset of every Banach space containing an isomorphic copy of \(\ell_1\). Further examples show that the new classes properly extend their classical counterparts and may contain mappings with no continuous positive iterate. Finally, we examine Lin's renorming of \(\ell_1\), identify an obstruction related to shift-type constructions, and establish a shrinking-diagonal criterion that reduces the remaining closed-set problem to the
construction of a suitably controlled uniformly Lipschitzian mapping.
\end{abstract}

\section{Introduction}

Fixed point theorems for nonlinear operators are fundamental tools across mathematics and engineering. A central concept is the fixed point property (FPP), which guarantees that every mapping within a given class on a nonempty, closed, bounded, convex set $K$ has a fixed point. For nonexpansive mappings, this line of research originated in 1965: Browder established the FPP for Hilbert spaces, Browder and Göhde independently proved it for uniformly convex Banach spaces, and Kirk extended the result to reflexive spaces with normal structure. Expanding beyond Banach's contraction principle, these foundational results sparked extensive research into how geometric properties influence the FPP (see \cite{GK2, KSims}).

\smallskip 

In \cite{GK1}, Goebel and Kirk introduced the class of asymptotically nonexpansive mappings. A mapping $T$ is said to be asymptotically nonexpansive if the Lipschitz constants of its iterates converge to $1$ as $n \to \infty$. That is,
\[
\|T^n(x) - T^n(y)\| \le (1+\kappa_n)\|x - y\|, \qquad \forall\, x, y \in K,\,n\in\mathbb{N},
\]
where $(\kappa_n)\subset [0,\infty)$ and $\kappa_n \to 0$. 

They proved the FPP for these mappings in the setting of  uniformly convex spaces. Since then, the study of the FPP for such mappings has developed in several directions. As highlighted in \cite{Ki-MY-S}, the interest in this class of mappings lies in their ability to test the limits of nonexpansive theory. Following the seminal works of Goebel and Kirk, later studies focused on loosening both underlying geometric conditions and mapping regularity. For example, Kirk \cite{Ki1} relaxed the requirement of uniform convexity, extending the Goebel-Kirk theorem to spaces satisfying the weaker condition $\epsilon_0(X) < 1$ (Definition \ref{def:2.2}). This result opened the door to exploring the FPP within broader classes of reflexive Banach spaces. In another direction, developments shifted toward \textit{mappings of asymptotically nonexpansive type}, where the restriction of a uniform Lipschitz constant for each iterate is replaced by a more flexible asymptotic condition:
\begin{equation}\label{eq:intro_type}
\limsup_{n \to \infty} \Big\{ \sup_{y \in K} \big\{ \|T^n(x) - T^n(y)\| - \|x - y\| \big\} \Big\} \le 0, \quad \forall\, x \in K.
\end{equation}

\noindent Current research develops these questions in several directions. These include extensions to nonlinear settings, such as $\mathrm{CAT}(0)$ and hyperbolic spaces (cf. \cite{Ki2,Ki3,Ki4, KPa}), as well as the study of mappings under relaxed notions of nonexpansivity \cite{Bar1, Bar2, DBen1, Ki1, Kirk, Tingley}. For instance, \cite[Theorem 4.17(ii)]{Bar2} shows that for any $\alpha\in (0,1)$ the unit ball $B_X$ of any Banach space $X$ admits a fixed point-free mapping $T\colon B_X\to B_X$ satisfying $\inf_{y\in B_X}\|y - T(y)\|=0$ and 
\[
\limsup_{n\to\infty}\Big\{ \sup_{y\in B_X}\|T^n(x) - T^n(y)\| - \|x - y\|^\alpha\Big\}\leq 0,\quad\forall x\in B_X.
\]
Such developments highlight a growing interest in the interplay between the FPP and variant notions of asymptotic nonexpansivity (cf. \cite{DBen1, Ki-MY-S} and references therein).

\subsection{Goal.} The primary objective of this paper is to extend these classical results to a broader class of mappings, whose regularity is characterized by a sequence of H\"older exponents $\alpha_n$ converging to $1$.

\begin{defin}\label{def:1.1}
Let \(K\) be a nonempty subset of \(X\), and let \(T\colon K\to K\) be a mapping.

\begin{enumerate}[label=\textnormal{(\roman*)}]
\item We say that \(T\) is \emph{asymptotically H\"older nonexpansive} if there exist sequences \((\alpha_n)\subset(0,1]\) and \((\varepsilon_n)\subset[0,\infty)\) such that $\alpha_n\to1$, $\varepsilon_n\to0$, and
\[
\|T^n x-T^n y\|\leq(1+\varepsilon_n)\|x-y\|^{\alpha_n}
\]
for all \(x,y\in K\) and \(n\geq 1\).\\[1.5mm]

\item Given \(C\geq1\), we say that \(T\) is \emph{uniformly asymptotically H\"older-Lipschitz with constant \(C\)} if
there exists a sequence \((\alpha_n)\subset(0,1]\), with \(\alpha_n\to1\), such that
\[
\|T^n x-T^n y\|\leq C\|x-y\|^{\alpha_n}
\]
for all \(x,y\in K\) and \(n\geq1\).\\[.5mm]

\item We say that \(T\) is \emph{of asymptotically H\"older nonexpansive type} if there exists a sequence
\((\alpha_n)\subset(0,1]\), with \(\alpha_n\to1\), such that
\begin{equation}\label{eqn:1.2}
\limsup_{n\to\infty}\sup_{y\in K}\left(\|T^n x-T^n y\|-\|x-y\|^{\alpha_n}\right)\leq0
\end{equation}
for every \(x\in K\).
\end{enumerate}
\end{defin}

\medskip 

When the exponent sequence must be emphasized, we use the prefix \((\alpha_n)\)-H\"older.

\begin{remark}
Note that in the case when $(\alpha_n) \equiv 1$, the conditions above reduce to their respective classical counterparts. Also, Tingley's example \cite{Tingley} shows that the supremum
condition in \eqref{eqn:1.2} cannot simply be suppressed. 
\end{remark}

\medskip

\subsection{Organization of the paper}  Section \ref{sec:2} provides the necessary preliminaries. In Section \ref{sec:3} we establish our main results. In Section \ref{sec:4}, we study the FPP for the class of maps considered in this paper under Lin's renorming for $\ell_1$. Section \ref{sec:5} exhibits several examples of mappings within our proposed class.


\section{Geometric preliminaries}\label{sec:2}

We briefly recall some classical concepts from the geometry of Banach spaces.

\begin{defin}\label{def:2.1}
Let $X$ be a Banach space. The modulus of convexity \cite{Clark} of $X$ is the function $\delta_X : [0, 2] \to [0, 1]$ defined by
\begin{equation}
\delta_X(\epsilon) = \inf \Big\{ 1 - \Big\| \frac{x+y}{2} \Big\| : \|x\| \le 1, \|y\| \le 1, \|x-y\| \ge \epsilon \Big\}.
\end{equation}
\end{defin}

\smallskip 

This function quantifies, in a precise manner, the deviation of the norm from the strict convexity of the unit ball. In particular, positive values of $\delta_X(\epsilon)$ imply that points separated by at least $\epsilon$ have midpoints strictly inside $B_X$, providing a basis for compactness and convergence methods. Note that, for $d, \epsilon > 0$ and $x, y \in X$ with $\|x\| \le d$, $\|y\| \le d$ and $\|x-y\| \ge \epsilon$, one has $\left\| x+y \right\| \le 2d \left( 1 - \delta_X \left(\epsilon/d \right) \right)$. From the modulus of convexity, one can introduce a global quantity that measures the degree of convexity of the space.

\begin{defin}[\cite{Goebel,GK2}]\label{def:2.2}
The characteristic of convexity of a Banach space $X$ is defined by 
\[
\epsilon_0(X) = \sup \big\{ \epsilon \in [0, 2] : \delta_X(\epsilon) = 0 \big\}.
\]
\end{defin}

\smallskip 

The condition $\epsilon_0(X) < 1$ is pivotal in metric fixed point theory (see \cite{Goebel, james64, Turret82}). It is well known that spaces enjoying this condition are \textit{uniformly non-square} and, hence, reflexive (cf. \cite{Goebel,james64}). Furthermore, the class of uniformly convex spaces is strictly contained within the class of spaces satisfying $\epsilon_0(X) < 1$. In the sequel, we recall some key properties related to these functions that are used implicitly in this work. For instance, $\delta_X$ is continuous on $[0, 2)$ and increasing on $[\epsilon_0(X), 2]$ (cf. \cite[Lemma~5.1]{GK2}). Additionally, for examples of non-uniformly convex space with $\epsilon_0(X)<1$, see \cite[Section 4]{Casini-Maluta}. Finally, let us recall that a sequence $(x_n)$ in $X$ is said to be semi-normalized if $0< \inf_n\|x_n\|\leq \sup_n\|x_n\|<\infty$. It is called {\it basic} if it is a {\it Schauder} basis for its closed linear span $\llbracket  x_n \rrbracket$. 

\medskip 


\section{Main results}\label{sec:3}

The study of $\alpha$-H\"older continuous mappings was initiated by W. A. Kirk in 1998 \cite{Kirk} through minimal displacement estimates. More recent developments can be found in \cite{Bar1, Bar2}. Within this framework, however, the existence of fixed points becomes a central question.  In this paper, we address this problem by allowing H\"older exponents to oscillate and asymptotically converge to $1$. This approach enables us to move beyond displacement bounds and establish the FPP. Our main fixed point results are the following.

\subsection{Fixed point theorems}

\begin{theorem}\label{thm:main} Let $X$ be a Banach space such that $\epsilon_0(X) < 1$, and let $K \subset X$ be a nonempty, closed, bounded, and convex subset. If $T : K \to K$ is of asymptotically Hölder nonexpansive type, then there exists $x \in K$ such that $T^n x \to x$.
\end{theorem}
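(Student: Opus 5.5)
We follow Kirk's argument for mappings of asymptotically nonexpansive type in spaces with $\epsilon_0(X)<1$, based on asymptotic radii of orbits. Since $X$ is reflexive and $K$ is weakly compact, for $x\in K$ we set
\[
r(y,x)=\limsup_{n\to\infty}\|T^n x-y\|,\qquad r(x)=\inf_{y\in K} r(y,x),
\]
and we call $C(x)=\{y\in K:\ r(y,x)=r(x)\}$ the asymptotic center of the orbit of $x$. The function $y\mapsto r(y,x)$ is convex and continuous, so weak compactness makes $C(x)$ nonempty. We start from any $x_0\in K$ and build a sequence $x_{m+1}\in C(x_m)$. The goal is to show that $r(x_m)\to 0$, or that some $r(x_m)=0$. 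In either case we obtain a point $z$ whose orbit converges to itself.

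\textbf{Key inequality.} Take $y\in C(x)$ and fix $m$. For $n>m$,
\[
\|T^n x - T^m y\|=\|T^m(T^{n-m}x)-T^m y\|\le \|T^{n-m}x-y\|^{\alpha_m}+\eta_m(y),
\]
where $\eta_m(y)=\sup_{u}\big(\|T^m u-T^m y\|-\|u-y\|^{\alpha_m}\big)$. Here the H\"older type condition \eqref{eqn:1.2} is applied at the point $y$, with the supremum taken over the other variable. This is exactly why the supremum in the definition matters. Taking $\limsup_n$ gives
\[
r(T^m y,x)\le r(x)^{\alpha_m}+\eta_m(y),\qquad \limsup_m \eta_m(y)\le 0 .
\]
Because $\alpha_m\to1$ and $K$ is bounded, $r(x)^{\alpha_m}\to r(x)$. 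Note that this holds even when $r(x)<1$, since $t^{\alpha}\to t$ for each fixed $t\ge 0$. Hence $\limsup_m r(T^m y,x)\le r(x)$. Up to this point the argument only replaces Kirk's Lipschitz constant $1$ by a quantity tending to $r(x)$; this is the heart of the adaptation.

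\textbf{Uniform convexity step.} Assume $\|T^m y-y\|\ge \epsilon$ along a subsequence. Both $y$ and $T^my$ lie, asymptotically, within distance about $r(x)+o(1)$ of $T^n x$. Let $d=r(x)+o(1)$. The definition of $\delta_X$ then gives
\[
r\Big(\tfrac{y+T^m y}{2},x\Big)\le d\big(1-\delta_X(\epsilon/d)\big).
\]
If $\epsilon/d>\epsilon_0(X)$, this contradicts the minimality of $r(x)$ whenever $r(x)>0$. We conclude that $\limsup_m\|T^m y-y\|\le \epsilon_0(X)\, r(x)$. Next we estimate $r(y)=\limsup_m\|T^m y - y\|$ up to the choice of center; more precisely, $r(y)\le \limsup_m \|T^my-y\|$. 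This gives $r(x_{m+1})\le \epsilon_0(X)\,r(x_m)$, so $r(x_m)\le \epsilon_0(X)^m\, r(x_0)\to0$. We also have
\[
\|x_{m+1}-x_m\|\le r(x_{m+1},x_m)+r(x_m)\lesssim (1+\epsilon_0)\,r(x_m).
\]
Here the bound $\|x_{m+1}-x_m\|\le \limsup_k\|T^k x_{m+1}-x_m\| + \limsup_k\|T^k x_{m+1}-x_{m+1}\|$ is used, whose right side is controlled by the same key inequality. So $(x_m)$ is Cauchy and converges to some $z\in K$.

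\textbf{Main obstacle.} The delicate part is the final step: proving that $T^n z\to z$ without any continuity of $T^n$. We propose to pass condition \eqref{eqn:1.2} through the limit at the point $z$:
\[
\|T^n z - z\|\le \|T^n z - T^n x_m\|+\|T^n x_m - x_m\|+\|x_m-z\|.
\]
The first term is bounded by $\|z-x_m\|^{\alpha_n}+\eta_n(z)$, again using the supremum at the base point $z$. Taking $\limsup_n$ yields
\[
\limsup_n\|T^n z-z\|\le 2\|z-x_m\|+\limsup_n\|T^n x_m-x_m\|,
\]
and the last term is $O(r(x_{m-1}))$. Letting $m\to\infty$ gives $T^n z\to z$.

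The step that needs the most care is the case $r(x)<1$ inside the convexity step. There the perturbation $r^{\alpha_m}-r$ must be $o(1)$ uniformly enough to be absorbed by $\delta_X$. We handle this by fixing $x$ first and letting $m\to\infty$, so the perturbation is genuinely $o(1)$ for that fixed $x$.
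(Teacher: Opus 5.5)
Your proposal is correct and follows essentially the paper's route: Kirk's iteration of asymptotic centers, the midpoint/$\delta_X$ argument giving $\limsup_k\|T^ky-y\|\le\epsilon_0(X)\,r(x)$ for a center $y$ of the orbit of $x$, a Cauchy sequence of centers, and transfer of the type condition to the limit point. Taking $\limsup_n$ before letting $m\to\infty$ just spares you the paper's rescaling to $\operatorname{diam}(K)\le 1$. One small slip: the Cauchy estimate should read $\|x_{m+1}-x_m\|\le r(x_m)+\limsup_k\|T^kx_m-x_m\|\le 2\epsilon_0(X)\,r(x_{m-1})$ for $m\ge 1$, not $(1+\epsilon_0)\,r(x_m)$, which fails when $r(x_m)=0\neq\|x_{m+1}-x_m\|$; the corrected bound is still geometric, so the conclusion is unaffected.
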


\begin{proof} For \(u\in K\), set
\[
d(u):=\limsup_{m\to\infty}\|u-T^m u\|.
\]
We shall construct a sequence \((x_n)\subset K\) such that
\[
d(x_{n+1})\leq\epsilon_0(X)d(x_n)
\]
and
\[
\|x_{n+1}-x_n\|\leq 2d(x_n).
\]
We first justify a normalization that will be used throughout the proof. Let \(D=\operatorname{diam}(K)\). If \(D=0\), the conclusion is immediate. Otherwise, replace \(K\) by \(D^{-1}K\) and \(T\) by the corresponding conjugate mapping. This transformation preserves the property of being of asymptotically H\"older nonexpansive type. Indeed,
if
\[
\omega_n(u):=\sup_{v\in K}\left(\|T^n u-T^n v\|-\|u-v\|^{\alpha_n}\right),
\]
then the error term for the rescaled mapping differs from \(D^{-1}\omega_n(u)\) by a quantity bounded in absolute value by
\[
D^{-1}\bigl|1-D^{1-\alpha_n}\bigr|D^{\alpha_n},
\]
which converges to zero because \(\alpha_n\to1\). We may therefore assume that $\operatorname{diam}(K)\leq1$. Fix \(x\in K\), and let
\[
\rho(x):=\inf_{z\in K}\limsup_{m\to\infty}\|z-T^m x\|
\]
be the asymptotic radius of the orbit \((T^m x)\) relative to \(K\). Since \(\varepsilon_0(X)<1\), the space \(X\) is reflexive. Hence \(K\) is weakly compact. Let
\[
r_x(z):=\limsup_{m\to\infty}\|z-T^m x\|,\qquad z\in K.
\]
It is easy to show that $r_x$ is $1$-Lipschitz and convex. Consequently, each of its sublevel sets is norm closed and convex, and hence weakly closed. For $j\geq 1$, set
\[
A_j(x):=\left\{ z\in K\colon r_x(z)\leq \rho(x)+ \frac1j\right\}.
\]
Each $A_j(x)$ is nonempty by the definition of $\rho(x)$, weakly closed in $K$, and the family $(A_j(x))$ is decreasing. Since $K$ is weakly compact, 
\[
\bigcap_{j=1}^{\infty}A_j(x)\neq\varnothing.
\]
Therefore, we may choose \(z\in K\) such that
\begin{equation}\label{eq:asymptotic-center}
\limsup_{m\to\infty}\|z-T^m x\|=\rho(x).
\end{equation}
Notice that \(x\) is an admissible competitor in the definition of \(\rho(x)\), and consequently
\begin{equation}\label{eq:rho-bounded-by-d}
\rho(x)\leq d(x).
\end{equation}
We claim that
\begin{equation}\label{eq:key-convexity-estimate}
d(z)\leq\epsilon_0(X)\rho(x).
\end{equation}
We first consider the case \(\rho(x)=0\). In this case $T^m x\to z$. For \(u\in K\), write
\[
\omega_n(u):=\sup_{v\in K}\left(\|T^n u-T^n v\|-\|u-v\|^{\alpha_n}\right).
\]
By hypothesis,
\[
\limsup_{n\to\infty}\omega_n(u)\leq0\qquad(u\in K).
\]
Choose integers \(m_n>n\) such that \(m_n-n\to\infty\). Then
\begin{align*}
\|z-T^n z\|
&\leq\|z-T^{m_n}x\|+\|T^n(T^{m_n-n}x)-T^n z\|\\
&\leq\|z-T^{m_n}x\|+\|T^{m_n-n}x-z\|^{\alpha_n}+\omega_n(z).
\end{align*}
Since \(T^m x\to z\), \(\alpha_n\to1\), and
\(\limsup_n\omega_n(z)\leq0\), it follows that
\[
\|z-T^n z\|\longrightarrow0.
\]
Thus \(d(z)=0\), and \eqref{eq:key-convexity-estimate} follows.

\smallskip 

Suppose now that \(\rho(x)>0\). If \(d(z)=0\), there is nothing to prove, so assume that \(d(z)>0\). Fix
\[
0<\varepsilon<\min\{\rho(x),d(z)\}.
\]
By \eqref{eq:asymptotic-center}, there exists \(N_\varepsilon\) such that
\begin{equation}\label{eq:center-tail-bound}
\|z-T^m x\|\leq\rho(x)+\varepsilon\qquad(m\geq N_\varepsilon).
\end{equation}
Choose an infinite set \(J_\varepsilon\subset\mathbb N\) such that
\begin{equation}\label{eq:d-subsequence}
\|z-T^jz\|\geq d(z)-\varepsilon\qquad(j\in J_\varepsilon).
\end{equation}
After removing finitely many indices, we may also assume that
\[
\omega_j(z)\leq\varepsilon\qquad(j\in J_\varepsilon).
\]
Fix \(j\in J_\varepsilon\). If \(m\geq N_\varepsilon+j\), then \(m-j\geq N_\varepsilon\), and hence
\begin{align*}
\|T^jz-T^m x\|
&=\|T^jz-T^j(T^{m-j}x)\|\\
&\leq\|z-T^{m-j}x\|^{\alpha_j}+\omega_j(z)\\
&\leq\bigl(\rho(x)+\varepsilon\bigr)^{\alpha_j}+\varepsilon.
\end{align*}
Since \(0\leq\rho(x),\varepsilon\leq1\) and \(0<\alpha_j\leq1\), concavity gives
\[
\bigl(\rho(x)+\varepsilon\bigr)^{\alpha_j}\leq\rho(x)^{\alpha_j}+\varepsilon^{\alpha_j}.
\]
Moreover, \(\varepsilon\leq\varepsilon^{\alpha_j}\). Therefore,
\begin{equation}\label{eq:second-radius}
\|T^jz-T^m x\|\leq R_{j,\varepsilon}:=\rho(x)^{\alpha_j}+2\varepsilon^{\alpha_j}.
\end{equation}
The same number also bounds the first distance. Indeed,
\[
\rho(x)+\varepsilon\leq\rho(x)^{\alpha_j}+\varepsilon^{\alpha_j}\leq R_{j,\varepsilon},
\]
and thus, by \eqref{eq:center-tail-bound},
\begin{equation}\label{eq:first-radius}
\|z-T^m x\|\leq R_{j,\varepsilon}.
\end{equation}
Set
\[
w_j:=\frac{z+T^jz}{2}\in K.
\]
By \eqref{eq:d-subsequence}, \eqref{eq:second-radius}, \eqref{eq:first-radius}, and the definition of the modulus of
convexity,
\[
\|w_j-T^m x\|\leq R_{j,\varepsilon}\left[1-\delta_X\left(\frac{d(z)-\varepsilon}{R_{j,\varepsilon}}\right)\right]
\]
for every sufficiently large \(m\). Since \(\rho(x)\) is the minimal asymptotic radius of the orbit, we obtain
\begin{equation}\label{eq:minimal-radius-inequality}
\rho(x)\leq R_{j,\varepsilon}\left[1-\delta_X\left(\frac{d(z)-\varepsilon}{R_{j,\varepsilon}}\right)\right].
\end{equation}
Let \(j\to\infty\) through \(J_\varepsilon\). Since
\(\alpha_j\to1\),
\[
R_{j,\varepsilon}\longrightarrow\rho(x)+2\varepsilon.
\]
Using the continuity of \(\delta_X\) on \([0,2)\), with the endpoint case obtained by monotonicity, inequality \eqref{eq:minimal-radius-inequality} yields
\[
    \rho(x)
    \leq
    \bigl(\rho(x)+2\varepsilon\bigr)
    \left[
        1-
        \delta_X\left(
            \frac{d(z)-\varepsilon}
                 {\rho(x)+2\varepsilon}
        \right)
    \right].
\]
Letting \(\varepsilon\downarrow0\), we get
\[
\rho(x)\leq\rho(x)\left[1-\delta_X\left(\frac{d(z)}{\rho(x)}\right)\right].
\]
Since \(\rho(x)>0\), it follows that
\[
\delta_X\left(\frac{d(z)}{\rho(x)}\right)=0.
\]
By the definition of the characteristic of convexity,
\[
\frac{d(z)}{\rho(x)}\leq\epsilon_0(X),
\]
which proves \eqref{eq:key-convexity-estimate}. We also have
\begin{align}
\|z-x\|
&\leq\limsup_{m\to\infty}\bigl(\|z-T^m x\|+\|T^m x-x\|\bigr)\notag\\
&\leq\rho(x)+d(x)\leq 2d(x).
\label{eq:center-distance}
\end{align}
Write \(q=\epsilon_0(X)<1\). Starting from an arbitrary \(x_0\in K\), construct inductively \(x_{n+1}\) as an asymptotic center
of the orbit of \(x_n\). From \eqref{eq:key-convexity-estimate}, \eqref{eq:rho-bounded-by-d}, and \eqref{eq:center-distance}, we obtain
\[
d(x_{n+1}) \leq q\,d(x_n)
\]
and
\[
\|x_{n+1}-x_n\|\leq 2d(x_n).
\]
Consequently,
\[
d(x_n)\leq q^n d(x_0)
\]
and
\[
\|x_{n+1}-x_n\|\leq 2q^n d(x_0).
\]
Thus \((x_n)\) is a Cauchy sequence. Since \(K\) is closed, there exists \(y\in K\) such that
\[
x_n\longrightarrow y.
\]
It remains to show that \(d(y)=0\). Fix \(n\). For every \(m\),
\begin{align*}
\|y-T^m y\|&\leq\|y-x_n\|+\|x_n-T^m x_n\|+
\|T^m x_n-T^m y\|.
\end{align*}
The asymptotically H\"older nonexpansive type condition gives
\[
\limsup_{m\to\infty}\|T^m x_n-T^m y\|\leq\lim_{m\to\infty}\|x_n-y\|^{\alpha_m}=\|x_n-y\|.
\]
Therefore,
\[
d(y)\leq d(x_n)+2\|x_n-y\|.
\]
Letting \(n\to\infty\), we get that \(d(y)=0\). Hence $\|T^m y-y\|\to 0$, proving the result. 
\end{proof}

\smallskip 

\begin{remark} If $T^N$ is continuous, then $T$ has a fixed point. Indeed, if \(T^nx\to x\) and \(T^N\) is continuous, then $T^{n+N}x=T^N(T^nx)\to T^Nx$. Since \(T^{n+N}x\to x\), it follows that \(T^Nx=x\). Hence $T^{kN+1}x=Tx$ for every \(k\), whereas \(T^{kN+1}x\to x\). Therefore \(Tx=x\).
\end{remark}

\begin{cor} Let $X$ and $K$ be as in Theorem \ref{thm:main}. Then every asymptotically Hölder nonexpansive mapping $T \colon K \to K$ possesses a fixed point.
\end{cor}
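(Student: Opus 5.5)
The plan is to reduce the corollary to Theorem \ref{thm:main} together with the preceding Remark. Two things must be checked: that an asymptotically H\"older nonexpansive mapping is of asymptotically H\"older nonexpansive type, and that some positive iterate of $T$ is continuous. Once both hold, Theorem \ref{thm:main} gives $x\in K$ with $T^n x\to x$. The Remark, applied with $N=1$, then yields $Tx=x$.

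For the first point, let $(\alpha_n)$ and $(\varepsilon_n)$ be as in Definition \ref{def:1.1}(i), and set $D=\diam(K)<\infty$. For $x,y\in K$ we have
\[
\|T^nx-T^ny\|-\|x-y\|^{\alpha_n}\le \varepsilon_n\|x-y\|^{\alpha_n}\le \varepsilon_n\max\{1,D\},
\]
since $\alpha_n\in(0,1]$ gives $t^{\alpha_n}\le\max\{1,D\}$ for $t\in[0,D]$. The right-hand side does not depend on $x$ or $y$ and tends to $0$. Hence condition \eqref{eqn:1.2} holds with the same sequence $(\alpha_n)$.

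For the second point, take $n=1$ in the defining inequality: $\|Tx-Ty\|\le(1+\varepsilon_1)\|x-y\|^{\alpha_1}$ with $\alpha_1>0$. So $T$ is H\"older continuous, in particular continuous, and the Remark applies with $N=1$. Alternatively, $T^n x\to x$ directly gives $T^{n+1}x=T(T^nx)\to Tx$, while also $T^{n+1}x\to x$, so $Tx=x$.

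Nothing here is a real obstacle. The only point needing care is that the bound on the error term must be uniform in $y$, and boundedness of $K$ provides this.
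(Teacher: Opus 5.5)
Your proposal is correct and follows the route the paper intends. The paper gives no separate proof; the corollary is deduced from Theorem \ref{thm:main} and the preceding Remark. Your two verifications supply exactly what that deduction needs: boundedness of $K$ gives the uniform bound $\varepsilon_n\max\{1,\diam(K)\}$, which makes $T$ of asymptotically H\"older nonexpansive type, and the case $n=1$ of the defining inequality makes $T$ continuous, so the Remark applies with $N=1$.
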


\vskip .1cm 

While these results significantly generalize \cite[Theorem 1]{Ki1}, the extent to which other classical results (such as those in \cite{Casini-Maluta,DBen1,MR}) can be extended to this framework remains a subject for future investigation. For the present, it is worth noting that a compact version of Theorem \ref{thm:main} follows directly from the arguments in \cite{Ki1}:

\vskip .1cm 

\begin{theorem}\label{thm:maincompact} Let $K$ be a nonempty, compact, and convex subset of a Banach space $X$. Suppose $T \colon K \to K$ is of asymptotically H\"older nonexpansive type and has a continuous iterate $T^N$ for some $N \in \mathbb{N}$. Then $T$ has a fixed point in $K$.
\end{theorem}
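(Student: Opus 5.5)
The plan is to reduce to the Remark following Theorem \ref{thm:main}: it suffices to find $x\in K$ with $T^n x\to x$, since continuity of $T^N$ then forces $Tx=x$. Following Kirk's compact argument \cite{Ki1}, I will minimize the displacement functional
\[
d(u):=\limsup_{m\to\infty}\|u-T^m u\|
\]
over $K$ and show that its infimum is zero and is attained.

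\textbf{Step 1 (a minimizing sequence and a limit point).} Put $d^*:=\inf_{u\in K}d(u)$ and choose $(u_k)\subset K$ with $d(u_k)\to d^*$. By compactness of $K$, pass to a subsequence with $u_k\to y\in K$. As at the end of the proof of Theorem \ref{thm:main}, the type condition gives
\[
\limsup_m\|T^m u_k-T^m y\|\le \lim_m\|u_k-y\|^{\alpha_m}=\|u_k-y\|,
\]
so $d(y)\le d(u_k)+2\|u_k-y\|$. Hence $d(y)\le d^*$, and the infimum is attained at $y$. No normalization of the diameter is needed here, because only the limit in $m$ is used.

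\textbf{Step 2 (showing $d^*=0$).} Suppose instead that $d^*>0$. The set of limit points of the orbit,
\[
L:=\{\text{cluster points of }(T^m y)\},
\]
is nonempty and compact. For $p\in L$, take $T^{m_i}y\to p$.

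I will first try to show that $d(p)\le d(y)$ and that $p$ behaves like $y$. By the type condition,
\[
\limsup_n \|T^n p - T^{n+m_i}y\|\le \|p-T^{m_i}y\|,
\]
which tends to $0$ as $i\to\infty$. So the orbit of $p$ asymptotically shadows the tail of the orbit of $y$.

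This suggests working with the asymptotic radius $r(z):=\limsup_m\|z-T^m y\|$ on $K$. This function is convex and continuous, so it attains its minimum at some centre $z$. Using the shadowing estimate, exactly as in the proof of Theorem \ref{thm:main} with $\alpha_j\to1$, one gets for every $j$
\[
\limsup_m\|T^j z-T^m y\|\le r(z)+o(1).
\]
The convex combination $(z+T^jz)/2$ then lies in $K$. Without uniform convexity, a midpoint argument alone will not give strict improvement.

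I therefore plan to substitute Schauder's fixed point theorem on $K$ applied to an averaging construction. Alternatively, I will use Kirk's trick: the set of minimizers of $d$ is compact, and I will show it is $T^N$-invariant up to limits. Concretely, I will pick $z$ minimizing $d$ and show $d(z)=0$ by considering the convex hull of the cluster set $L$, which is compact. The aim is a Brouwer/Schauder-type argument in which $T^N$ continuous maps $\overline{\mathrm{co}}\,L$ into itself approximately.

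\textbf{Main obstacle.} Step 2 is the hard part: proving $d^*=0$ without any geometric convexity condition. My first attempt will use Schauder's theorem for the continuous map $T^N$ on the compact convex set $K$. This gives a fixed point $x$ of $T^N$ directly, and then $T^{kN}x=x$ for all $k$.

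The type condition applied at the points $x$ and $Tx$ then gives
\[
\|T^{kN}(Tx)-T^{kN}x\|\le\|Tx-x\|^{\alpha_{kN}}+o(1).
\]
The left side equals $\|Tx-x\|$, so this is consistent but does not yet give $Tx=x$. I will then restrict attention to the finite orbit $\{x,Tx,\dots,T^{N-1}x\}$ and look for an $N$-periodic point that is in fact fixed. For this I will take the asymptotic centre of the finite periodic orbit within $K$. Because the orbit is periodic, $T^n z$ stays within radius $r+o(1)$ of the whole orbit. Compactness then gives a limit point $z'$ that is itself a centre, and $z'$ has displacement orbit contained in the compact set of centres. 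Applying Schauder to $T^N$ on the (compact, convex) set of centres yields a fixed point whose orbit under $T$ consists of centres. Iterating this argument to shrink the radius to zero is the delicate point, and I expect it to require the $\alpha_n\to1$ limit argument from Theorem \ref{thm:main}.
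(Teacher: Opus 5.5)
\textbf{There is a genuine gap: Step~2, which is the whole content of the theorem, is never proved.} Step~1 is correct. The same estimate even gives $|d(u)-d(y)|\le 2\|u-y\|$, so $d$ is continuous on $K$. What follows Step~1, however, is a list of strategies, each abandoned or left open, and it ends at ``the delicate point''.

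The most concrete strategy also fails as stated. Suppose $T^Nx=x$ and $\phi(z)=\max_{0\le i<N}\|z-T^ix\|$. The set $C$ of minimizers of $\phi$ need not be $T^N$-invariant. The type condition controls $\|T^nz-T^nu\|$ only asymptotically in $n$, so for $z\in C$ you get $\limsup_k\phi(T^{kN}z)\le\phi(z)$, not $\phi(T^Nz)\le\phi(z)$. Hence Schauder cannot be applied on $C$. Even if you produced a new $N$-periodic point in $C$, nothing forces its orbit radius to drop strictly. Moreover, compact convex sets have normal structure but in general not uniform normal structure, so an iteration of strict decreases need not reach $0$. Some selection mechanism is unavoidable, because a fixed point of $T^N$ need not be fixed by $T$ (take $t\mapsto -t$ on $[-1,1]$ with $N=2$).

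\textbf{The missing idea is a minimality (Zorn) argument combined with the Brodskii--Milman theorem that compact convex sets have normal structure.} The paper writes no proof and appeals to \cite{Ki1}. That appeal is legitimate because $\sup_{0\le t\le \operatorname{diam}K}(t^{\alpha_n}-t)\le 1-\alpha_n\to0$, so $T$ is of asymptotically nonexpansive type in Kirk's classical sense. A direct argument runs as follows.

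Let $L(z)\neq\varnothing$ be the set of cluster points of $(T^nz)$. Let $\mathcal F$ be the family of nonempty closed convex $F\subseteq K$ such that $L(z)\subseteq F$ for every $z\in F$. Then $K\in\mathcal F$, and by compactness every chain in $\mathcal F$ has its intersection in $\mathcal F$. Zorn's lemma therefore gives a minimal $F_0$.

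For $z\in F_0$ set $r_z(w)=\limsup_n\|w-T^nz\|=\max_{p\in L(z)}\|w-p\|$, a $1$-Lipschitz convex function. With $\omega_m$ as in the proof of Theorem~\ref{thm:main}, the type condition gives $r_z(T^mw)\le r_z(w)^{\alpha_m}+\omega_m(w)$. Hence $\limsup_m r_z(T^mw)\le r_z(w)$. It follows that the set of minimizers of $r_z$ on $F_0$ belongs to $\mathcal F$, and by minimality it equals $F_0$.

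So $r_z$ is constant on $F_0\supseteq L(z)$. Evaluating at an endpoint of a diameter of $L(z)$ shows the constant is $\operatorname{diam}L(z)$. Therefore every point of $\overline{\mathrm{co}}\,L(z)$ is diametral. Normal structure then forces $L(z)=\{p\}$, and $r_z\equiv0$ gives $F_0=\{p\}$. Hence $T^np\to p$, and the Remark following Theorem~\ref{thm:main} yields $Tp=p$.
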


\subsection{Fixed-point-free constructions}

The next result underlines the relevance of Theorem \ref{thm:maincompact}.

\begin{theorem}\label{thm:main4} Let $X$ be an infinite dimensional Banach space. Then there exists a compact convex set $K\subset X$ and a fixed-point-free affine mapping $T\colon K\to K$ which is of asymptotically H\"older nonexpansive type and none of its positive iterates $T^n$ is continuous. 
\end{theorem}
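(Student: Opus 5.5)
The plan is to build $K$ as a compact ``$\sigma$-simplex'' spanned by a null sequence along a basic sequence, and to let $T$ act as a forward shift of barycentric weights. The key observation is that the last assertion comes for free. By Theorem \ref{thm:maincompact}, a mapping of asymptotically H\"older nonexpansive type on a compact convex set that has a continuous iterate $T^N$ has a fixed point. So once $T$ is fixed-point-free and of the right type, no positive iterate can be continuous. It therefore suffices to produce a compact convex $K$ and an affine, fixed-point-free $T\colon K\to K$ of asymptotically H\"older nonexpansive type.

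\textbf{Construction.} By Mazur's theorem, $X$ contains a normalized basic sequence $(e_n)_{n\ge1}$ with bounded biorthogonal functionals $(e_n^*)$ on $\llbracket e_n\rrbracket$. Put $x_0=0$ and $x_n=2^{-n}e_n$ for $n\ge1$, and define
\[
K=\Big\{\sum_{n\ge0}\lambda_n x_n:\ \lambda_n\ge0,\ \sum_{n\ge0}\lambda_n=1\Big\}.
\]
The representation is unique: $\lambda_n=2^ne_n^*(u)$ for $n\ge1$, and $\lambda_0=1-\sum_{n\ge1}\lambda_n$.

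\textbf{Properties of $K$.} $K$ is convex. It is also compact, being the image of the compact set $\{(\lambda_n)_{n\ge1}:\lambda_n\ge0,\ \sum\lambda_n\le1\}$ (with the product topology) under $(\lambda_n)\mapsto\sum\lambda_nx_n$. This map is continuous because $\sum\|x_n\|<\infty$ makes the tails uniformly small.

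\textbf{The mapping.} Define
\[
T\Big(\sum_{n\ge0}\lambda_nx_n\Big)=\sum_{n\ge0}\lambda_nx_{n+1}.
\]
This maps $K$ into $K$ (the new weights are $0,\lambda_0,\lambda_1,\dots$) and is affine, since the weights depend affinely on $u$.

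\textbf{No fixed point.} A fixed point would have weights $\mu$ with $\mu_0=0$ and $\mu_{n+1}=\mu_n$ for all $n$. Hence all $\mu_n=0$, contradicting $\sum\mu_n=1$.

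\textbf{Discontinuity check.} Although it is not needed logically, it is reassuring: $x_n\to0$, but $Tx_n=x_{n+1}\to0\neq x_1=T0$.

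\textbf{Type condition.} For $u=\sum\lambda_nx_n$ and $v=\sum\mu_nx_n$ in $K$,
\[
\|T^ku-T^kv\|=\Big\|\sum_{n\ge0}(\lambda_n-\mu_n)x_{n+k}\Big\|\le 2^{-k}\sum_{n\ge0}|\lambda_n-\mu_n|\le 2^{1-k}.
\]
So for any exponents $\alpha_k\to1$ (even $\alpha_k\equiv1$), $\sup_{y}\bigl(\|T^kx-T^ky\|-\|x-y\|^{\alpha_k}\bigr)\le 2^{1-k}\to0$, and \eqref{eqn:1.2} holds.

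The only delicate points are the uniqueness of the representation (so that $T$ is well defined and affine) and the compactness of $K$. Both rest on the boundedness of the coordinate functionals and the summability of $\|x_n\|$; these are where I would be most careful. The non-continuity of every iterate then follows from Theorem \ref{thm:maincompact}.
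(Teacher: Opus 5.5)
Your proof is correct and is essentially the paper's construction: an affine shift of barycentric weights on a compact $\sigma$-simplex whose vertices converge to a limit vertex. Your choice $x_n=2^{-n}e_n\to 0$ makes the estimate $\|T^ku-T^kv\|\le 2^{1-k}$ immediate, which avoids the paper's tuning of $u_n=\sum_{i\le n}\gamma_i x_i$ against the basis constant. For the discontinuity of the iterates, the paper applies Schauder's theorem to each affine, fixed-point-free $T^k$, whereas you invoke Theorem~\ref{thm:maincompact}, which the paper states without proof; you can avoid it by extending your own check, since $T^kx_n=x_{n+k}\to 0$ while $T^k0=x_k\neq 0$.
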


\begin{proof}
Although the core argument is implicitly contained in \cite[Theorem 4.17]{Bar2}, we outline its main steps here for the sake of self-containment and the reader's convenience. Let $(x_n)_{n=1}^\infty$ be a normalized basic sequence and denote by $\mathsf{K}_{\mathsf{m}}$ its basic constant. Choose a decreasing null sequence $(\gamma_n)_{n=1}^\infty$ in $(0,1)$ satisfying
\[
\max\Bigg( 3\sum_{i=n}^\infty \gamma_i\,,\, 2\mathsf{K}_{\mathsf{m}}\sum_{i=1}^\infty \frac{ \gamma_{i+n}}{\gamma_i}\Big)\leq \frac{1}{2^{n+1}}, \quad \forall n\in\mathbb{N}.
\]
For each $n \in \mathbb{N}$, define $u_n=\sum_{i=1}^n \gamma_i x_i$. The sequence $(u_n)_{n=1}^\infty$ converges in norm to $u_0=\sum_{i=1}^\infty \gamma_i x_i$ and is semi-normalized (cf. \cite[p. 17]{Bar2}). Furthermore, by suitably defining the functional sequence $(u^*_n)_{n=1}^\infty$, one readily verifies that $(u_n, u^*_n)_{n=1}^\infty$ forms a biorthogonal system on the closed linear span $\llbracket x_n \rrbracket$. Now, consider the set
\[
K=\Bigg\{ x:=t_0 u_0 + \sum_{n=1}^\infty t_n u_n \colon t_n\geq 0\,\,\forall n\in\mathbb{N}\cup\{0\},\,\, \sum_{n=0}^\infty t_n\leq 1\Bigg\}.
\]
We observe that $K$ is a compact and convex set (see \cite[Claim 1]{Bar2}). Let us now introduce the affine mapping $T\colon K\to K$ defined by
\[
T(x) =\Bigg( 1 - \sum_{n=1}^\infty t_n\Bigg)u_1 + \sum_{n=1}^\infty t_n u_{n+1}.
\]
It was shown in \cite{Bar2} that $T$ is fixed point free. Thus by Schauder's fixed point theorem it cannot be continuous. Moreover, for any $k\in\mathbb{N}$, a straightforward calculation yields
\[
T^k(x)= \Bigg( 1 - \sum_{n=1}^\infty t_n\Bigg) u_k+ \sum_{n=1}^\infty t_n u_{n+k}.
\]
Direct computation shows that each iterate $T^k$ is likewise affine and fixed point free; hence, none of them can be continuous. Take arbitrary points $x, y\in K$, say $x=\sum_{n=0}^\infty t_n u_n$ and $y=\sum_{n=0}^\infty s_n u_n$, and set $a_n=t_n- s_n$. Then we have
\[
T^k(x) - T^k(y)= \sum_{n=1}^\infty\Bigg(a_0 + \sum_{i=n}^\infty a_i\Bigg)\gamma_{n+k} x_{n+k} - a_0\sum_{n=1}^\infty \gamma_{n+k} x_{n+k},
\]
from which it follows that
\[
\| T^k(x) - T^k(y)\|\leq\Bigg( 2\mathsf{K}_{\mathsf{m}}\sum_{n=1}^\infty \frac{\gamma_{n+k}}{\gamma_{n}}\Bigg) \|x - y\| + \frac{1}{2^k}. 
\]
In particular, for any increasing sequence $\alpha_k\to 1$ in $(0,1)$, we have
\[
\limsup_{k\to\infty} \sup_{y\in K}\Big( \|T^k(x) - T^k(y)\| - \|x-y\|^{\alpha_k}\Big)\leq 0.
\]
\end{proof}

\vskip .1cm 

The following construction shows that the geometric assumption in Theorem~\ref{thm:main} cannot be removed without imposing additional restrictions. 

\begin{theorem}\label{thm:main2} Let $X$ be a Banach space containing an isomorphic copy of $c_0$. Then there exists a bounded, closed, and convex subset $K$ such that, for any increasing sequence $\alpha_n \to 1$, there exists a fixed-point-free asymptotically $(\alpha_n)$-Hölder-nonexpansive mapping $T \colon K \to K$.
\end{theorem}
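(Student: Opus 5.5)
The plan is to reduce the statement to the classical failure of the fixed point property for nonexpansive mappings in spaces containing $c_0$. Once $\operatorname{diam}(K)\le 1$, every nonexpansive self-map is automatically asymptotically $(\alpha_n)$-H\"older nonexpansive, for \emph{every} sequence $(\alpha_n)\subset(0,1]$. Indeed, if $\|x-y\|\le 1$ and $\alpha_n\le 1$, then
\[
\|T^nx-T^ny\|\le\|x-y\|\le\|x-y\|^{\alpha_n},
\]
so Definition~\ref{def:1.1}(i) holds with $\varepsilon_n\equiv 0$. In particular, the set $K$ can be chosen once and independently of $(\alpha_n)$, which is exactly what the statement requires. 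It therefore suffices to produce a nonempty, closed, bounded, convex $K\subset X$ of diameter at most $1$ and a fixed-point-free nonexpansive $T\colon K\to K$.

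\textbf{The model case $X=c_0$.} Take $K=\{x\in c_0: 0\le x_i\le 1\ \text{for all } i\}$ and the right shift with a $1$ inserted, $T(x_1,x_2,\dots)=(1,x_1,x_2,\dots)$. This $T$ maps $K$ into $K$ and is an isometry. A fixed point would have to equal $(1,1,1,\dots)\notin c_0$, so $T$ has none. Moreover $K$ is closed, bounded and convex, with $\operatorname{diam}(K)=1$.

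\textbf{General $X$.} A mere isomorphic copy $J\colon c_0\to X$ does not suffice if one simply conjugates. The conjugate $JTJ^{-1}$ has iterates that are Lipschitz with constant up to $\|J\|\|J^{-1}\|>1$, and this constant does not tend to $1$. Shrinking $K$ does not help either, because $r^{1-\alpha_n}\to 1$. I would therefore invoke the theorem of Dowling, Lennard and Turett: a Banach space contains an isomorphic copy of $c_0$ if and only if it contains an asymptotically isometric copy of $c_0$. That is, there exist a sequence $(x_n)$ and a null sequence $\varepsilon_n\downarrow 0$ such that
\[
\sup_n(1-\varepsilon_n)|t_n|\le\Big\|\sum_n t_nx_n\Big\|\le\sup_n|t_n|
\]
for all $(t_n)\in c_0$. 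With this copy I would take
\[
K=\Big\{\sum_n t_nx_n:\ 1\ge t_1\ge t_2\ge\cdots\ge 0,\ t_n\to0\Big\}
\]
(a closed, bounded, convex set) together with the shift-type map
\[
T\Big(\sum_n t_nx_n\Big)=x_1+\sum_n t_nx_{n+1}.
\]
A fixed point would force $t_n=1$ for all $n$, which contradicts $t_n\to 0$, so $T$ is fixed-point-free. Finally, rescale $K$ and $T$ by a positive constant so that $\operatorname{diam}(K)\le 1$; nonexpansiveness and the absence of fixed points are preserved by this rescaling.

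\textbf{Main obstacle.} The delicate step is verifying that $T$ is nonexpansive on this $K$, even though the norm is only asymptotically isometric to the sup norm. The monotonicity of the coefficients is what makes this possible. Setting $a_n=t_n-s_n$, the estimate
\[
\|Tx-Ty\|=\Big\|\sum_n a_nx_{n+1}\Big\|\le\sup_n|a_n|
\]
follows from the upper bound. The difficulty is to compare $\sup_n|a_n|$ with $\|x-y\|$, since the lower bound only gives $\|x-y\|\ge\sup_n(1-\varepsilon_n)|a_n|$, with a loss. Here I would follow the Dowling--Lennard--Turett argument: replace the vectors $x_n$ by suitably renormalized combinations (for instance differences or tails adapted to the weights $1-\varepsilon_n$) so that the loss is absorbed by the shift, which moves mass to indices with smaller $\varepsilon_n$. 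I would cite their result for this step rather than redo it. Once nonexpansiveness is in hand, the first paragraph finishes the proof.
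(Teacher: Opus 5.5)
Your reduction is correct: a nonexpansive self-map of a set of diameter at most $1$ satisfies Definition~\ref{def:1.1}(i) with $\varepsilon_n\equiv0$ for every $(\alpha_n)$. The $c_0$ model also works. The gap is the general case, which carries all the content, and both ingredients you use there fail.

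First, the equivalence you attribute to Dowling--Lennard--Turett is false. Take $c_0$ with the equivalent norm $|||x|||=\|x\|_\infty+\sum_k2^{-k}|x(k)|$. A sequence with $\sup_n(1-\varepsilon_n)|t_n|\le|||\sum_nt_nx_n|||\le\sup_n|t_n|$ would be weakly, hence coordinatewise, null with $\|x_n\|_\infty\to1$. That forces $\liminf_n|||x_1+x_n|||\ge1+\sum_k2^{-k}|x_1(k)|$, whereas the upper estimate gives $|||x_1+x_n|||\le1$. Hence $x_1=0$, which is absurd. A James-type distortion argument gives tail estimates of the form $(1-\varepsilon_m)\sup_{k\ge m}|t_k|\le\|\sum_{k\ge m}t_kx_k\|\le(1+\varepsilon_m)\sup_{k\ge m}|t_k|$, not the estimate you state.

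Second, even for a genuine asymptotically isometric copy, your $T$ need not be nonexpansive on your $K$. Both $x_1$ and $0$ lie in $K$, and $Tx_1-T0=x_2$, so you would need $\|x_2\|\le\|x_1\|$. This fails in $c_0$ normed by $\sup_n(1-\varepsilon_n)|t_n|$, with $x_n$ the unit vectors and $(\varepsilon_n)$ strictly decreasing. The ``suitable renormalization'' you defer to is exactly the missing argument, and there is no result to cite for it in that form.

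There are two ways to close the gap. The first is to cite what you actually need: the theorem of Dowling, Lennard and Turett that every Banach space containing an isomorphic copy of $c_0$ fails the fixed point property for nonexpansive mappings. Rescale that set to diameter at most $1$, and your reduction then proves more than stated: one $T$ works for all $(\alpha_n)$, with $\varepsilon_n\equiv0$.

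The second is the paper's route, which never aims at nonexpansiveness. Definition~\ref{def:1.1}(i) allows a factor $1+\varepsilon_n$, so it is enough that the distortion on the tail carrying $T^nx-T^ny$ tends to $1$. The paper takes $(e_j)$ with $\sup_{j\ge m}|a_j|\le\|\sum_{j\ge m}a_je_j\|\le(1+\delta_m)\sup_{j\ge m}|a_j|$ and $K=\{\sum_jt_je_j:(t_j)\in c_0,\ 0\le t_j\le1\}$. On this set it uses the $(\alpha_n)$-dependent, non-Lipschitz shift $e_1+t_1^{\alpha_1}e_2+\sum_{j\ge2}c_jt_j^{p_j}e_{j+1}$. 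The first $n$ coordinates of $T^nx$ do not depend on $x$, and the remaining coordinate differences are at most $\sup_k|t_k-s_k|^{\alpha_n}$. This yields $\|T^nx-T^ny\|\le(1+\delta_{n+1})\|x-y\|^{\alpha_n}$. Under that tail estimate, even your plain shift $e_1+\sum_jt_je_{j+1}$ satisfies the same bound.
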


\begin{proof}
Since \(X\) contains an isomorphic copy of 
\(c_0\), the distortion argument used in \cite{DLT1} provides a basic sequence \((e_n)\subset X\) and a decreasing null sequence
\((\delta_n)\subset(0,\infty)\) such that
\begin{equation}\label{eq:c0-tail-estimate-corrected}
    \sup_{j\geq m}|a_j|\leq \left\|\sum_{j=m}^{\infty}a_j e_j\right\|\leq (1+\delta_m)\sup_{j\geq m}|a_j|
\end{equation}
for every \((a_j)\in c_0\) and every \(m\geq1\). Fix this sequence and set
\[
K:=\left\{\sum_{j=1}^{\infty}t_j e_j:(t_j)\in c_0,\quad 0\leq t_j\leq1\right\}.
\]
Using \eqref{eq:c0-tail-estimate-corrected} one easily shows that \(K\) is closed, bounded, and convex.  Now let $(\alpha_n)\subset(0,1)$ be an arbitrary increasing sequence
such that $\alpha_n\to1$. Choose a sequence \((B_j)_{j\geq2}\subset(0,1)\) such that
\begin{equation}\label{eq:positive-B-product}
    \prod_{j=2}^{\infty}B_j>0.
\end{equation}
For \(j\geq2\), put
\[
    p_j:=\frac{\alpha_j}{\alpha_{j-1}}\ge 1
\]
and
\begin{equation}\label{eq:corrected-coefficients}
    c_j
    :=
    \frac{\alpha_{j-1}}{\alpha_j}B_j
    =
    \frac{B_j}{p_j}.
\end{equation}
Then \(0<c_j<1\) and
\begin{equation}\label{eq:derivative-cancellation}
    c_jp_j=B_j<1.
\end{equation}
Define \(T\colon K\to K\) by
\begin{equation}\label{eq:c0-corrected-map}
    T\left(\sum_{j=1}^{\infty}t_j e_j\right)
    =
    e_1+t_1^{\alpha_1}e_2
    +
    \sum_{j=2}^{\infty}
    c_j t_j^{p_j}e_{j+1}.
\end{equation}
This mapping is well defined. Indeed, if \((t_j)\in c_0\) and
\(0\leq t_j\leq1\), then
\[
    0\leq c_jt_j^{p_j}\leq t_j^{p_j}\leq t_j,
\]
so
\[
c_jt_j^{p_j}\longrightarrow0.
\]
All the coordinates of \(T(x)\) belong to \([0,1]\), and hence \(T(x)\in K\). We next show that \(T\) has no fixed point. Suppose, to the contrary, that $x=\sum_{j=1}^{\infty}t_j e_j\in K$ satisfies \(T(x)=x\). Comparing coordinates in \eqref{eq:c0-corrected-map}, we obtain
\[
t_1=1,\qquad t_2=t_1^{\alpha_1}=1,
\]
and
\begin{equation}\label{eq:fixed-coordinate-recursion}
t_{j+1}=c_jt_j^{p_j},\qquad j\geq2.
\end{equation}
In particular, every \(t_j\) is strictly positive. Set
\[
s_j:=-\log t_j,\qquad j\geq2.
\]
Then \(s_2=0\), and \eqref{eq:fixed-coordinate-recursion} gives
\[
s_{j+1}=-\log c_j+\frac{\alpha_j}{\alpha_{j-1}}s_j.
\]
Dividing by \(\alpha_j\), we get
\[
\frac{s_{j+1}}{\alpha_j}=\frac{s_j}{\alpha_{j-1}}+\frac{-\log c_j}{\alpha_j}.
\]
Iteration yields
\begin{equation}\label{eq:logarithmic-fixed-recursion}
\frac{s_{n+1}}{\alpha_n}=\sum_{j=2}^{n}\frac{-\log c_j}{\alpha_j}.
\end{equation}
By \eqref{eq:corrected-coefficients},
\[
-\log c_j=\log\left(\frac{\alpha_j}{\alpha_{j-1}}\right)-\log B_j.
\]
Since \(\alpha_j\geq\alpha_1>0\),
\begin{align*}
\sum_{j=2}^{\infty}\frac{-\log c_j}{\alpha_j}
&\leq\frac1{\alpha_1}\sum_{j=2}^{\infty}\left[\log\left(\frac{\alpha_j}{\alpha_{j-1}}\right)-\log B_j\right]\\
&=\frac1{\alpha_1}\left[\log\left(\frac1{\alpha_1}\right)-\log\left(\prod_{j=2}^{\infty}B_j\right)\right]<\infty.
\end{align*}
It follows from \eqref{eq:logarithmic-fixed-recursion} that \((s_n)\) is bounded. Consequently,
\[
\inf_{n\geq2}t_n=\inf_{n\geq2}e^{-s_n}>0,
\]
contradicting \((t_n)\in c_0\). Therefore \(T\) is fixed point free. We now proceed to prove $T$ is asymptotically \((\alpha_n)\)-H\"older nonexpansive. Introduce the scalar maps
\[
\varphi_1(t):=t^{\alpha_1}
\]
and
\[
\varphi_j(t):=c_jt^{p_j},\qquad j\geq2,
\]
on \([0,1]\). For \(j\geq2\),
\[
\varphi_j'(t)=c_jp_jt^{p_j-1},
\]
and therefore, by \eqref{eq:derivative-cancellation},
\[
0\leq\varphi_j'(t)\leq B_j<1,\qquad 0\leq t\leq1.
\]
Thus
\begin{equation}\label{eq:scalar-Lipschitz}
    |\varphi_j(s)-\varphi_j(t)|
    \leq
    B_j|s-t|
    \leq
    |s-t|.
\end{equation}
For \(k\geq2\), the contribution of the \(k\)-th coordinate of \(x\) to \(T^n x\) is obtained by composing
\[
\varphi_k,\varphi_{k+1},\ldots,\varphi_{k+n-1}.
\]
Hence \eqref{eq:scalar-Lipschitz} gives
\begin{equation}\label{eq:tail-coordinate-bound}
\left|(\varphi_{k+n-1}\circ\cdots\circ\varphi_k)(t)-(\varphi_{k+n-1}\circ\cdots\circ\varphi_k)(s)\right|\leq|t-s|.
\end{equation}
For the first coordinate, the composition has the form
\[
(\varphi_n\circ\cdots\circ\varphi_2\circ\varphi_1)(t)=C_n t^{\alpha_n}
\]
for some \(C_n\in(0,1]\). Since
\[
|t^{\alpha_n}-s^{\alpha_n}|\leq |t-s|^{\alpha_n}, \qquad s,t\in[0,1],
\]
we obtain
\begin{equation}\label{eq:first-coordinate-bound}
    \left|
       (\varphi_n\circ\cdots\circ\varphi_1)(t)
       -
       (\varphi_n\circ\cdots\circ\varphi_1)(s)
    \right|
    \leq
    |t-s|^{\alpha_n}.
\end{equation}
Let $x=\sum_{j=1}^{\infty}t_j e_j$, $y=\sum_{j=1}^{\infty}s_j e_j$ belong to \(K\). The first \(n\) coordinates of \(T^nx\) and \(T^ny\) coincide. By \eqref{eq:tail-coordinate-bound} and \eqref{eq:first-coordinate-bound},
\[
\sup_{j\geq n+1}|(T^nx-T^ny)_j|\leq\sup_{k\geq1}|t_k-s_k|^{\alpha_n}.
\]
Here we used \(0\leq|t_k-s_k|\leq1\), which implies $|t_k-s_k|\leq |t_k-s_k|^{\alpha_n}$. Applying \eqref{eq:c0-tail-estimate-corrected} to the tail beginning at the \((n+1)\)-st coordinate, we obtain
\begin{align*}
    \|T^nx-T^ny\|
    &\leq
    (1+\delta_{n+1})
    \sup_{j\geq n+1}|(T^nx-T^ny)_j|\\
    &\leq
    (1+\delta_{n+1})
    \left(
        \sup_{k\geq1}|t_k-s_k|
    \right)^{\alpha_n}.
\end{align*}
Consequently, using the lower estimate in \eqref{eq:c0-tail-estimate-corrected}, applied with \(m=1\), gives
\[
\|T^nx-T^ny\|\leq (1+\delta_{n+1})\|x-y\|^{\alpha_n}.
\]
Since \(\delta_{n+1}\to0\), this completes the proof.
\end{proof}

The corresponding closed-set problem for spaces containing \(\ell_1\) appears to be more delicate. We obtain the following
partial result.

\begin{theorem}\label{thm:main3} Let $X$ be a Banach space containing an isomorphic copy of $\ell_1$. Then there exists a bounded and convex subset $K$ such that, for any increasing sequence $\alpha_n \to 1$ in $(0,1)$, there exists an affine, fixed-point-free mapping $T \colon K \to K$ that is uniformly asymptotically $(\alpha_n)$-Hölder 1-Lipschitz.
\end{theorem}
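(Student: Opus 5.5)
The plan is an explicit construction: take a weighted shift on the non-closed convex hull of a suitably scaled $\ell_1$-basis. Its iterates will then be Lipschitz with constant at most $1$ on a set of diameter at most $1$, which is enough for the H\"older estimate. Unlike Theorem~\ref{thm:main2}, no distortion argument should be needed; a fixed isomorphism constant can be absorbed into a geometric choice of weights.

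\emph{Setup.} Let $J\colon \ell_1\to X$ be an isomorphic embedding, and put $e_j:=J(\mathbf e_j)$, where $\mathbf e_j$ is the unit vector basis of $\ell_1$. Rescaling $J$, we may assume
\[
C^{-1}\sum_j|a_j|\leq\Big\|\sum_j a_je_j\Big\|\leq\sum_j|a_j|
\]
for all finitely supported scalars $(a_j)$, with some $C\geq1$. Fix $r\in(0,1)$ with $rC\leq1$ and set $\beta_j:=\tfrac12 r^{j-1}$. Define
\[
K:=\operatorname{conv}\{\beta_je_j: j\geq1\}=\Big\{\sum_{j=1}^N t_j\beta_je_j: N\in\mathbb N,\ t_j\geq0,\ \sum_j t_j=1\Big\}.
\]
This set is bounded and convex but not closed. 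Since $\|\beta_je_j\|\leq\tfrac12$, we get $\operatorname{diam}(K)\leq1$. Define $T\colon K\to K$ by
\[
T\Big(\sum_j t_j\beta_je_j\Big)=\sum_j t_j\beta_{j+1}e_{j+1}.
\]
This map is well defined, because the representation in terms of the $e_j$ is unique, and it is affine. It maps $K$ into $K$, since the coefficients $t_j$ are simply transferred to the next vertex.

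\emph{Fixed-point-freeness.} Each $x\in K$ has finite support. Let $N$ be the largest index with $t_N>0$. Then $Tx$ has a nonzero coefficient at index $N+1$ and none beyond it, so $Tx\neq x$.

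\emph{The H\"older estimate.} Let $x=\sum t_j\beta_je_j$, $y=\sum s_j\beta_je_j$ and $a_j:=t_j-s_j$. Then
\[
T^nx-T^ny=\sum_j a_j\beta_{j+n}e_{j+n}.
\]
Using the upper $\ell_1$-estimate, then $\beta_{j+n}=r^n\beta_j$, then the lower estimate,
\[
\|T^nx-T^ny\|\leq\sum_j|a_j|\beta_{j+n}=r^n\sum_j|a_j|\beta_j\leq r^nC\|x-y\|\leq\|x-y\|.
\]
Since $\|x-y\|\leq\operatorname{diam}K\leq1$ and $\alpha_n\in(0,1)$, we have $\|x-y\|\leq\|x-y\|^{\alpha_n}$. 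Hence $\|T^nx-T^ny\|\leq 1\cdot\|x-y\|^{\alpha_n}$ for every $n\geq1$, which is the uniform asymptotic $(\alpha_n)$-H\"older condition with constant $1$. The set $K$ does not depend on $(\alpha_n)$, as the statement requires; in fact the same $T$ works for every admissible sequence $(\alpha_n)$.

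The only delicate point is getting constant exactly $1$ rather than $C$. This is handled by two choices: the geometric weights absorb the isomorphism constant through $rC\leq1$, and the normalization $\operatorname{diam}K\leq1$ lets us pass from the Lipschitz bound to the H\"older bound. I would also double-check that $T$ is genuinely a self-map of the non-closed set $K$, with no limit points involved. This holds because every element of $K$ is a finite convex combination, and $T$ preserves that form.
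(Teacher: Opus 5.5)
Your proof is correct and is essentially the paper's construction: the right shift on a non-closed convex hull of geometrically weighted $\ell_1$-vectors $\lambda_n e_n$, with the weights absorbing the isomorphism constant (your condition $rC\le 1$ plays the role of the paper's $\sum_n\lambda_n\le\eta$ together with $\lambda_{n+k}\le\lambda_n\lambda_k$). The differences are minor simplifications: you use a plain isomorphic embedding instead of James's distortion theorem and finite rather than countable convex combinations, and in place of the paper's H\"older-inequality estimate with exponents $p_k=(1-\alpha_k)^{-1}$ you use a Lipschitz bound together with $\operatorname{diam}(K)\le 1$.
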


\begin{proof}
Let $\eta\in (0,1)$ be fixed. In view of our assumptions and James’s distortion theorem (cf. \cite{DLT1}), there exists a basic sequence $(e_i)$ in $X$ such that
\begin{equation}\label{eqn:7}
\eta\sum_{n=1}^\infty |a_n|\leq \Bigg\| \sum_{n=1}^\infty a_n e_n\Bigg\|\leq \sum_{n=1}^\infty |a_n|,
\end{equation}
for all $(a_n)_{n=1}^\infty \in \ell_1$. 

Let $(\lambda_n) \subset (0,1)$ be a decreasing null sequence such that $\sum_{n=1}^\infty \lambda_n \leq \eta$ and $\lambda_{n+k} \leq \lambda_n \lambda_k$ for all $n, k \in \mathbb{N}$.  Define
\[
K=\Bigg\{ x=\sum_{n=1}^\infty t_n f_n \colon t_n\geq 0,\,\sum_{n=1}^\infty t_n=1,\, f_n=\lambda_n e_n\Bigg\}.
\]
Then $K$ is clearly bounded and convex. Now define $T \colon K \to K$ to be the right-shift mapping induced by $(f_n)$, given by $T(f_n) = f_{n+1}$. It is easy to see that $T$ has no fixed point. Fix an increasing sequence $(\alpha_n) \subset (0,1)$ converging to $1$. For $n\in\mathbb{N}$, set $p_n= (1-\alpha_n)^{-1}$ and take $q_n>1$ so that $1/p_n+1/q_n=1$. For any $k \in \mathbb{N}$ and points $x = \sum t_n f_n, y = \sum s_n f_n \in K$, combining Hölder's inequality with \eqref{eqn:7} yields
\[
\begin{split}
\|T^k(x) - T^k(y)\|
&\leq \Bigg(\sum_{n=1}^\infty |t_n - s_n|^{q_k}\lambda_{n+k}\Bigg)^{\alpha_k}\cdot\Bigg( \sum_{n=1}^\infty \lambda_{n+k}\Bigg)^{1-\alpha_k}\\[2mm]
&\leq \Bigg(\lambda_k \sum_{n=1}^\infty |t_n - s_n|\lambda_n\Bigg)^{\alpha_k}\\[2mm]
&\leq \Bigg( \eta\sum_{n=1}^\infty |t_n - s_n|\lambda_n\Bigg)^{\alpha_k}\leq 
\|x -y\|^{\alpha_k}. 
\end{split}
\]
\end{proof}

\subsection{Asymptotic nonexpansiveness on $B_{\ell_1}$}

It is worth noting that the set \(K\) in Theorem \ref{thm:main3} is not closed. By contrast, on the closed unit ball of \(\ell_1\), the norm-splitting property below yields a strong rigidity phenomenon, showing that the existence of an approximate fixed point sequence already forces the existence of a fixed point.

\begin{lemma}
\label{lem:splitting-l1}
Let $(x_j)$ be a bounded sequence in $\ell_1$ such that
\[
x_j \xrightarrow{\sigma(\ell_1,c_0)} z,
\]
and suppose that
\[
\lim_{j\to\infty}\|x_j-z\|_1=r.
\]
Then, for every $y\in\ell_1$,
\[
\lim_{j\to\infty}\|x_j-y\|_1=r+\|z-y\|_1.
\]
\end{lemma}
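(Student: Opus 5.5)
The plan is the standard ``tail splitting'' argument for weak$^*$ convergence in $\ell_1$. Write $u_j:=x_j-z$, so that $u_j\to0$ coordinatewise. Here $\sigma(\ell_1,c_0)$-convergence of a bounded sequence is equivalent to coordinatewise convergence, since the unit vectors of $c_0$ are among the test functionals. Also $\|u_j\|_1\to r$. Put $w:=z-y\in\ell_1$. The goal is to show that $\|u_j+w\|_1\to r+\|w\|_1$.

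Fix $\varepsilon>0$. Since $w\in\ell_1$, choose $N$ with $\sum_{i>N}|w(i)|<\varepsilon$. Let $P_N$ denote the projection onto the first $N$ coordinates and $Q_N=I-P_N$. The norm splits exactly: $\|v\|_1=\|P_Nv\|_1+\|Q_Nv\|_1$. Coordinatewise convergence on finitely many coordinates gives $\|P_Nu_j\|_1\to0$. Hence
\[
\|Q_Nu_j\|_1=\|u_j\|_1-\|P_Nu_j\|_1\longrightarrow r.
\]
Next decompose
\[
\|u_j+w\|_1=\|P_N(u_j+w)\|_1+\|Q_N(u_j+w)\|_1 .
\]
The first term tends to $\|P_Nw\|_1$, which lies in $[\|w\|_1-\varepsilon,\|w\|_1]$. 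The second term differs from $\|Q_Nu_j\|_1$ by at most $\|Q_Nw\|_1<\varepsilon$. Therefore both
\[
\limsup_j\|u_j+w\|_1\quad\text{and}\quad\liminf_j\|u_j+w\|_1
\]
lie within $2\varepsilon$ of $r+\|w\|_1$. Letting $\varepsilon\downarrow0$ gives the claim.

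No step is a real obstacle. The only points requiring care are two. First, the identification of $\sigma(\ell_1,c_0)$-convergence with coordinatewise convergence; boundedness is not even needed for this direction. Second, noting that $z\in\ell_1$, so that $w\in\ell_1$ and the tail $Q_Nw$ can be made small. Everything else is the exact additivity of the $\ell_1$ norm over disjoint supports.
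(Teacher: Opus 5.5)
Your proposal is correct and is essentially the paper's argument. You truncate $w=z-y$ to a finite set of coordinates with tail mass below $\varepsilon$, use coordinatewise convergence of $u_j=x_j-z$ on that finite set, and control the remaining part by the triangle inequality. The only cosmetic differences are that you first isolate $\|Q_Nu_j\|_1\to r$, whereas the paper estimates $\sum_k\bigl(|u_j(k)+v(k)|-|u_j(k)|\bigr)$ directly over an arbitrary finite set $F$ and its complement.
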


\begin{proof}
Set
\[
u_j=x_j-z\qquad\text{and}\qquad v=z-y.
\]
Then $u_j\to0$ coordinatewise,
\[
\|u_j\|_1\longrightarrow r,
\]
and $x_j-y=u_j+v$. It is therefore enough to prove that
\[
\|u_j+v\|_1-\|u_j\|_1\longrightarrow\|v\|_1.
\]
Fix $\varepsilon>0$. Choose a finite set $F\subset\mathbb N$ such that
\[
\sum_{k\notin F}|v(k)|<\varepsilon.
\]
Since $u_j(k)\to0$ for every $k\in F$, we have
\[
\sum_{k\in F}\bigl(|u_j(k)+v(k)|-|u_j(k)|\bigr)\longrightarrow\sum_{k\in F}|v(k)|.
\]
On the other hand, by the triangle inequality,
\[
\left|\sum_{k\notin F}\bigl(|u_j(k)+v(k)|-|u_j(k)|\bigr)\right|\le\sum_{k\notin F}|v(k)|<\varepsilon.
\]
Consequently,
\[
\limsup_{j\to\infty}\left|\|u_j+v\|_1-\|u_j\|_1-\sum_{k\in F}|v(k)|\right|\le\varepsilon.
\]
Moreover,
\[
\left|\|v\|_1-\sum_{k\in F}|v(k)|\right|<\varepsilon.
\]
Hence,
\[
\limsup_{j\to\infty}\left|\|u_j+v\|_1-\|u_j\|_1-\|v\|_1\right|\le 2\varepsilon.
\]
Since $\varepsilon>0$ is arbitrary,
\[
\|u_j+v\|_1-\|u_j\|_1\longrightarrow \|v\|_1.
\]
Finally, since $\|u_j\|_1\to r$, we conclude that
\[
\|x_j-y\|_1=\|u_j+v\|_1 \longrightarrow r+\|z-y\|_1.
\]
\end{proof}

This lemma allows us to prove the following characterization.

\begin{theorem}
\label{thm:l1-gauge-criterion}
Let \(T\colon B_{\ell_1}\to B_{\ell_1}\), and suppose that there exists
a sequence of continuous functions $\varphi_n\colon[0,2]\to[0,\infty)$, $n\geq1$, such that $\varphi_n(0)=0$ for $n\geq 1$,
\begin{equation}\label{eq:gauge-pointwise-identity}
    \varphi_n(t)\longrightarrow t
    \qquad(t\in[0,2]),
\end{equation}
and
\begin{equation}\label{eq:gauge-iterate-estimate}
    \|T^nx-T^ny\|_1
    \leq
    \varphi_n(\|x-y\|_1)
\end{equation}
for all \(x,y\in B_{\ell_1}\) and \(n\geq1\). Then the following conditions are equivalent:
\begin{itemize}
    \item[(i)] \(T\) has a fixed point;

    \item[(ii)] \(T\) has an approximate fixed point sequence, that is, there
    exists \((x_j)\subset B_{\ell_1}\) such that $\|Tx_j-x_j\|_1\to 0$.
\end{itemize}
\end{theorem}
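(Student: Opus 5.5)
The implication (i)$\Rightarrow$(ii) is immediate: if $Tz=z$, take $x_j:=z$ for all $j$. For (ii)$\Rightarrow$(i), I plan to combine weak$^*$ compactness of $B_{\ell_1}=B_{c_0^*}$ with the norm-splitting Lemma~\ref{lem:splitting-l1}. The aim is to produce a point $z$ with $T^nz\to z$, and then use the continuity of $T$ to conclude that $z$ is a fixed point, as in the Remark following Theorem~\ref{thm:main}.

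\textbf{Step 1 (extracting $z$ and $r$).} Let $(x_j)\subset B_{\ell_1}$ satisfy $\|Tx_j-x_j\|_1\to0$. Since $c_0$ is separable, $B_{\ell_1}$ is weak$^*$ sequentially compact. After passing to a subsequence we may assume $x_j\to z$ in $\sigma(\ell_1,c_0)$, and $z\in B_{\ell_1}$ because the ball is weak$^*$ closed. Since $\|x_j-z\|_1\le 2$, a further subsequence gives $\|x_j-z\|_1\to r\in[0,2]$. Lemma~\ref{lem:splitting-l1} then yields, for every $y\in\ell_1$,
\[
\lim_{j\to\infty}\|x_j-y\|_1=r+\|z-y\|_1 .
\]

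\textbf{Step 2 (approximate fixed points for every iterate).} Fix $n$. By the triangle inequality and \eqref{eq:gauge-iterate-estimate},
\[
\|T^nx_j-x_j\|_1\le \|Tx_j-x_j\|_1+\sum_{i=1}^{n-1}\|T^{i}(Tx_j)-T^{i}x_j\|_1\le \|Tx_j-x_j\|_1+\sum_{i=1}^{n-1}\varphi_i(\|Tx_j-x_j\|_1).
\]
Each $\varphi_i$ is continuous with $\varphi_i(0)=0$, and only finitely many of them appear for fixed $n$. Hence $\|T^nx_j-x_j\|_1\to0$ as $j\to\infty$.

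\textbf{Step 3 (the key inequality).} I apply Step 1 with $y=T^nz$. Using Step 2 and \eqref{eq:gauge-iterate-estimate},
\[
\|x_j-T^nz\|_1\le \|x_j-T^nx_j\|_1+\varphi_n(\|x_j-z\|_1).
\]
Letting $j\to\infty$ and using the continuity of $\varphi_n$ at $r$ gives
\[
r+\|z-T^nz\|_1\le\varphi_n(r), \qquad\text{that is,}\qquad \|z-T^nz\|_1\le\varphi_n(r)-r .
\]
By \eqref{eq:gauge-pointwise-identity}, the right-hand side tends to $0$, so $T^nz\to z$. This step, where the exact additive splitting from the Lemma converts the asymptotic gauge into an orbit estimate, is the heart of the argument. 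The main care needed is ordering the subsequence extractions so that both the weak$^*$ limit and $\lim_j\|x_j-z\|_1$ exist.

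\textbf{Step 4 (conclusion).} The case $n=1$ of \eqref{eq:gauge-iterate-estimate} gives $\|Tx-Ty\|_1\le\varphi_1(\|x-y\|_1)$. Together with the continuity of $\varphi_1$ and $\varphi_1(0)=0$, this shows $T$ is continuous. Hence $T^{n+1}z=T(T^nz)\to Tz$. Since also $T^{n+1}z\to z$, uniqueness of limits gives $Tz=z$, which proves (i).
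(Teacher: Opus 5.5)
Your proposal is correct and follows the paper's proof essentially step for step. Both pass to a subsequence with $x_j\to z$ weak$^*$ and $\|x_j-z\|_1\to r$, propagate the approximate fixed point property to every iterate via $\sum_{i<m}\varphi_i(d_j)$, and combine Lemma~\ref{lem:splitting-l1} with $y=T^mz$ to get $\|T^mz-z\|_1\le\varphi_m(r)-r\to0$. The conclusion $Tz=z$ then follows from the continuity of $T$ supplied by $\varphi_1$.
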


\begin{proof}
The implication $(i)\Longrightarrow(ii)$ is immediate. We proceed to prove \((ii)\Rightarrow(i)\).

Let \((x_j)\subset B_{\ell_1}\) satisfy
\[
d_j:=\|Tx_j-x_j\|_1\longrightarrow0.
\]
Since \(\ell_1=c_0^*\) and \(c_0\) is separable, \(B_{\ell_1}\) is
sequentially compact in the weak-star topology. Passing to a
subsequence, we may assume that there exists \(z\in B_{\ell_1}\) such
that
\[
x_j\xrightarrow{\sigma(\ell_1,c_0)}z.
\]
Passing to a further subsequence, we may also assume that
\begin{equation}\label{eq:gauge-r-limit}
\|x_j-z\|_1\longrightarrow r
\end{equation}
for some \(r\in[0,2]\). We first show that, for every fixed \(m\geq1\),
\begin{equation}\label{eq:gauge-approximate-iterate}
\|T^mx_j-x_j\|_1\longrightarrow0.
\end{equation}
Define $\varphi_0(t):=t$ for $0\leq t\leq2$. By the triangle inequality and \eqref{eq:gauge-iterate-estimate},
\begin{align*}
    \|T^mx_j-x_j\|_1
    &\leq
    \sum_{i=0}^{m-1}
    \|T^{i+1}x_j-T^ix_j\|_1=\sum_{i=0}^{m-1}\|T^i(Tx_j)-T^i(x_j)\|_1\\
    &\leq
    \sum_{i=0}^{m-1}\varphi_i(d_j).
\end{align*}
For fixed \(m\), each function \(\varphi_i\), \(0\leq i<m\), is continuous at \(0\) and satisfies \(\varphi_i(0)=0\). Since
\(d_j\to0\), it follows that
\[
\sum_{i=0}^{m-1}\varphi_i(d_j)\longrightarrow0,
\]
which proves \eqref{eq:gauge-approximate-iterate}. Fix \(m\geq1\). From \eqref{eq:gauge-iterate-estimate},
\begin{align*}
\|x_j-T^mz\|_1
&\leq \|x_j-T^mx_j\|_1+\|T^mx_j-T^mz\|_1\\
&\leq \|x_j-T^mx_j\|_1+\varphi_m(\|x_j-z\|_1).
\end{align*}
Using \eqref{eq:gauge-approximate-iterate},
\eqref{eq:gauge-r-limit}, and the continuity of \(\varphi_m\), we
obtain
\begin{equation}\label{eq:gauge-upper-limit}
\limsup_{j\to\infty}\|x_j-T^mz\|_1\leq \varphi_m(r).
\end{equation}
On the other hand, the norm-splitting property of \(\ell_1\),
Lemma~\ref{lem:splitting-l1}, applied with \(y=T^mz\), gives
\begin{equation}\label{eq:gauge-splitting}
\lim_{j\to\infty}\|x_j-T^mz\|_1 =r+\|T^mz-z\|_1.
\end{equation}
Combining \eqref{eq:gauge-upper-limit} and
\eqref{eq:gauge-splitting}, we obtain
\begin{equation}\label{eq:gauge-orbit-bound}
0\leq \|T^mz-z\|_1\leq \varphi_m(r)-r.
\end{equation}
By \eqref{eq:gauge-pointwise-identity},
\[
\varphi_m(r)\longrightarrow r \qquad(m\to\infty).
\]
It follows from \eqref{eq:gauge-orbit-bound} that
\begin{equation}\label{eq:gauge-orbit-converges}
T^mz\longrightarrow z
\end{equation}
in norm. Finally, \(T\) is continuous. Indeed,
\[
\|Tx-Ty\|_1\leq \varphi_1(\|x-y\|_1),
\]
and \(\varphi_1(t)\to0\) as \(t\to0\). Hence
\[
T^{m+1}z=T(T^mz)\longrightarrow Tz.
\]
On the other hand, \eqref{eq:gauge-orbit-converges}, with \(m+1\) in place of \(m\), gives $T^{m+1}z\longrightarrow z$. By uniqueness of the norm limit, $Tz=z$. Thus \(T\) has a fixed point.
\end{proof}

\begin{cor}\label{cor:l1-variable-holder}
Let \(T\colon B_{\ell_1}\to B_{\ell_1}\) satisfy
\[
\|T^nx-T^ny\|_1\leq (1+\kappa_n)\|x-y\|_1^{\alpha_n},
\]
where \((\alpha_n)\subset (0,1]$, $(\kappa_n)\subset[0,\infty)\) and \(\alpha_n\to1\), \(\kappa_n\to 0\). Then \(T\) has
a fixed point if and only if it has an approximate fixed point
sequence.
\end{cor}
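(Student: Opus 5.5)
The corollary should follow directly from Theorem~\ref{thm:l1-gauge-criterion}. The plan is to exhibit gauge functions $\varphi_n$ meeting its hypotheses. The obvious choice is
\[
\varphi_n(t):=(1+\kappa_n)\,t^{\alpha_n},\qquad t\in[0,2],
\]
for which \eqref{eq:gauge-iterate-estimate} is exactly the assumed estimate on $T^n$.

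Next I would check the remaining requirements.

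\textbf{Continuity and vanishing at $0$.} Each $\varphi_n$ is continuous on $[0,2]$ and satisfies $\varphi_n(0)=0$, since $\alpha_n>0$.

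\textbf{Pointwise convergence \eqref{eq:gauge-pointwise-identity}.} For $t=0$ this is trivial. For $t\in(0,2]$ we have
\[
t^{\alpha_n}=e^{\alpha_n\log t}\to t,
\]
because $\alpha_n\to1$. Together with $1+\kappa_n\to1$ this gives $\varphi_n(t)\to t$. Note that $t$ ranges only up to $2=\operatorname{diam}(B_{\ell_1})$, so no uniformity issue arises; pointwise convergence is all the theorem asks for.

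Once these are verified, Theorem~\ref{thm:l1-gauge-criterion} gives the equivalence of (i) and (ii), which is exactly the statement. Within the theorem's proof, continuity of $T$ itself is supplied by $\varphi_1$, so nothing extra is needed there.

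The only real point is whether anything subtle is hidden, and I expect none. Two checks suffice. First, the hypothesis $\alpha_n\in(0,1]$ excludes $\alpha_n=0$, which would break $\varphi_n(0)=0$. Second, the theorem does not require $\varphi_n(t)\ge t$ or any monotonicity. Its proof only uses $\|T^mz-z\|_1\le \varphi_m(r)-r\to0$, which holds in our setting.
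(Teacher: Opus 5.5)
Your proposal is correct and is exactly the paper's proof: apply Theorem~\ref{thm:l1-gauge-criterion} with $\varphi_n(t)=(1+\kappa_n)t^{\alpha_n}$ on $[0,2]$. The hypotheses to check are continuity, $\varphi_n(0)=0$ (which uses $\alpha_n>0$), and the pointwise convergence $\varphi_n(t)\to t$, and you verify all three.
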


\begin{proof}
Apply Theorem~\ref{thm:l1-gauge-criterion} with
\[
\varphi_n(t):=(1+\kappa_n)t^{\alpha_n},\qquad 0\leq t\leq2.
\]
For every fixed \(t\in[0,2]\), $(1+\kappa_n)t^{\alpha_n}\to t$.
\end{proof}

It is known that affine self-maps on convex bounded sets have approximate fixed point sequences. Consequently, we have the following corollary. 

\begin{cor} $B_{\ell_1}$ has the fixed point property for affine asymptotically H\"older nonexpansive mappings. 
\end{cor}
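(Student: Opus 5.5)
The corollary follows from Corollary~\ref{cor:l1-variable-holder} once we produce an approximate fixed point sequence for an affine asymptotically H\"older nonexpansive map. So let \(T\colon B_{\ell_1}\to B_{\ell_1}\) be affine and asymptotically H\"older nonexpansive, with constants \((\varepsilon_n)\) and exponents \((\alpha_n)\). Then \(T\) meets the hypothesis of Corollary~\ref{cor:l1-variable-holder} with \(\kappa_n=\varepsilon_n\), so it suffices to construct \((x_j)\subset B_{\ell_1}\) with \(\|Tx_j-x_j\|_1\to0\).

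For this I would use the standard Ces\`aro-average argument for affine maps. Fix \(x\in B_{\ell_1}\) and put
\[
x_j:=\frac1j\sum_{i=0}^{j-1}T^ix\in B_{\ell_1},
\]
which lies in the ball by convexity. Applying \(T\) to a finite convex combination, affinity gives \(Tx_j=\frac1j\sum_{i=1}^{j}T^ix\). The sum then telescopes:
\[
\|Tx_j-x_j\|_1=\frac1j\|T^jx-x\|_1\le\frac2j\longrightarrow0 .
\]
Two points need care. First, the telescoping only uses affinity on finite convex combinations, which is exactly the notion of an affine map on a convex set, so no continuity of \(T\) is needed. Second, the bound \(\frac2j\) uses only that the ball has diameter \(2\).

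With the approximate fixed point sequence in hand, Corollary~\ref{cor:l1-variable-holder}, or equivalently Theorem~\ref{thm:l1-gauge-criterion} with \(\varphi_n(t)=(1+\varepsilon_n)t^{\alpha_n}\), gives a fixed point. None of the steps is really an obstacle; the only thing to verify is that \(\varphi_n\) meets the requirements of Theorem~\ref{thm:l1-gauge-criterion}:
\begin{itemize}
\item \(\varphi_n\) is continuous on \([0,2]\);
\item \(\varphi_n(0)=0\), which holds since \(\alpha_n>0\);
\item \(\varphi_n(t)\to t\) for each \(t\in[0,2]\), because \(\alpha_n\to1\) and \(\varepsilon_n\to0\).
\end{itemize}
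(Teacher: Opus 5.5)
Your proposal is correct and takes the same route as the paper. The paper cites the known fact that affine self-maps of bounded convex sets have approximate fixed point sequences and then applies Corollary~\ref{cor:l1-variable-holder}. You make that fact explicit through the Ces\`aro averages $x_j=\frac1j\sum_{i=0}^{j-1}T^ix$ and the telescoping estimate $\|Tx_j-x_j\|_1\le 2/j$, which is exactly the standard argument being invoked.
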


\begin{remark} It is worth noting that Karlovitz \cite{Karlovitz} proved that $B_{\ell_1}$ has the FPP, whereas Khamsi \cite{Khamsi} showed that bounded hyperconvex metric spaces have the approximate fixed point property for asymptotically nonexpansive mappings. To the best of our knowledge, it remains an open question whether or not closed, bounded, convex sets in Banach spaces share this latter property.
\end{remark}

\subsection{Asymptotic nonexpansivity away from the diagonal}

Throughout this subsection, \(X\) denotes a Banach space and \(K\subset X\) is a nonempty, closed, bounded, and convex set. For a mapping \(T\colon K\to K\), we write
\[
\operatorname{Lip}(T)=\sup_{\substack{x,y\in K\\x\neq y}}\frac{\|Tx-Ty\|}{\|x-y\|}.
\]

The following condition isolates the amount of asymptotic
nonexpansivity that is needed away from a shrinking neighborhood of the
diagonal.

\begin{defin}
\label{def:off-diagonal}
A mapping \(T\colon K\to K\) is said to be \emph{asymptotically nonexpansive away from the diagonal} if there exist
null positive sequences $(\delta_n)$ and $(\eta_n)$ such that
\[
\|T^nx-T^ny\|\le(1+\eta_n)\|x-y\|
\]
whenever \(x,y\in K\) satisfy
\[
\|x-y\|\ge\delta_n.
\]
\end{defin}

The main observation is that uniform Lipschitz control near the diagonal can be combined with asymptotic nonexpansivity away from the diagonal by means of a suitable sequence of H\"older exponents.

\begin{theorem}
\label{thm:off-diagonal-holder}
Let \(K\subset X\) be nonempty, closed, bounded, and convex, with $\operatorname{diam}(K)\le1$. Suppose that \(T\colon K\to K\) satisfies
\[
M:=\sup_{n\ge1}\operatorname{Lip}(T^n)<\infty
\]
and is asymptotically nonexpansive away from the diagonal. Then there
exist sequences $(\alpha_n)\subset(0,1)$, $(\kappa_n)\subset(1,\infty)$, such that $\alpha_n\to 1$, $\kappa_n\to 1$, and
\begin{equation}\label{eqn:thm3.9}
\|T^nx-T^ny\|\le\kappa_n\|x-y\|^{\alpha_n}
\end{equation}
for all \(x,y\in K\) and \(n\ge1\). Consequently, if \(T\) is fixed point free, then \(K\) fails the fixed point property for asymptotically H\"older nonexpansive mappings.
\end{theorem}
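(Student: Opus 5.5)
We prove \eqref{eqn:thm3.9} by splitting, for each fixed \(n\), the pairs \((x,y)\) according to whether \(t:=\|x-y\|\) is at least \(\delta_n\) or smaller. Throughout, \(t\in[0,1]\) because \(\operatorname{diam}(K)\le1\). Since \(t\le t^{\alpha}\) for any \(\alpha\in(0,1)\), the far-from-diagonal case is immediate. If \(t\ge\delta_n\), Definition~\ref{def:off-diagonal} gives
\[
\|T^nx-T^ny\|\le(1+\eta_n)t\le(1+\eta_n)t^{\alpha_n},
\]
whatever \(\alpha_n\in(0,1)\) we choose. So it suffices to take \(\kappa_n\ge1+\eta_n\).

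The near-diagonal case is where the exponent must be chosen. For \(t<\delta_n\) we only have the uniform Lipschitz bound \(\|T^nx-T^ny\|\le Mt\). We want \(Mt\le\kappa_n t^{\alpha_n}\), that is, \(Mt^{1-\alpha_n}\le\kappa_n\). Since \(t<\delta_n\), this follows from \(M\delta_n^{1-\alpha_n}\le\kappa_n\). We may assume \(M\ge1\) and \(\delta_n<1\), replacing \(\delta_n\) by \(\min\{\delta_n,1/2\}\), which preserves the hypothesis. We then choose \(\alpha_n\) so that \(\delta_n^{1-\alpha_n}\to0\) while \(\alpha_n\to1\). Concretely, set
\[
1-\alpha_n:=\min\Bigl\{\tfrac12,\ \bigl(\log(1/\delta_n)\bigr)^{-1/2}\Bigr\}.
\]
Then \(1-\alpha_n\to0\) because \(\log(1/\delta_n)\to\infty\). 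Moreover,
\[
\delta_n^{1-\alpha_n}=\exp\bigl(-(1-\alpha_n)\log(1/\delta_n)\bigr)=\exp\bigl(-\sqrt{\log(1/\delta_n)}\bigr)\longrightarrow0
\]
once \(\log(1/\delta_n)\ge4\). Hence \(M\delta_n^{1-\alpha_n}\to0\), and in particular it is at most \(1\) for all large \(n\).

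Now define
\[
\kappa_n:=\max\bigl\{1+\eta_n,\ M\delta_n^{1-\alpha_n}\bigr\}+\tfrac1n.
\]
Then \(\kappa_n>1\), \(\kappa_n\to1\), and both cases give \eqref{eqn:thm3.9} for every \(n\). For the finitely many small \(n\), \(\kappa_n\) may be large, but that is allowed: the definition only asks for \(\kappa_n\to1\). The bound \(Mt\le Mt^{\alpha_n}\) is trivially valid, so there is nothing delicate there.

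The consequence follows from Definition~\ref{def:1.1}(i) with \(\varepsilon_n:=\kappa_n-1\to0\). So \(T\) is an asymptotically H\"older nonexpansive, fixed-point-free self-map of \(K\), and \(K\) fails the corresponding fixed point property.

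The main obstacle is the balancing in the exponent choice: \(\alpha_n\) must tend to \(1\) slowly enough relative to \(\delta_n\) that \(\delta_n^{1-\alpha_n}\to0\). The choice \(1-\alpha_n=(\log(1/\delta_n))^{-1/2}\) is what makes this work, which is why I would fix it first.
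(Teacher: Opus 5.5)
Your proof is correct in substance and is essentially the paper's argument. You split pairs at the threshold $\delta_n$, handle far pairs by $t\le t^{\alpha_n}$, and handle near pairs by letting $t^{1-\alpha_n}\le\delta_n^{1-\alpha_n}$ absorb the uniform Lipschitz constant $M$. The only difference is the calibration of the exponent:
\begin{itemize}
\item The paper takes $1-\alpha_n=\log M/\log(1/\delta_n)$ (for $M>1$ and $\delta_n<1/M$). This gives $\delta_n^{1-\alpha_n}=1/M$ exactly and $\kappa_n=\max\{1+\eta_n,1+1/n\}$, with the faster rate $1-\alpha_n=O(1/\log(1/\delta_n))$.
\item Your choice $1-\alpha_n=(\log(1/\delta_n))^{-1/2}$ makes $\alpha_n\to1$ more slowly. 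In exchange it is independent of $M$ and avoids the separate case $M\le1$.
\end{itemize}

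One small correction is needed. Replacing $\delta_n$ by $\min\{\delta_n,1/2\}$ does not preserve the hypothesis, because a smaller threshold demands the off-diagonal inequality on more pairs. So on the finitely many indices where $\delta_n>1/2$, your displayed $\kappa_n$ need not cover pairs with $1/2\le t<\delta_n$. For those pairs you must fall back on the bound $Mt$. The fix is to set $\kappa_n:=M+1$ on those indices, as the paper does; this is exactly the crude estimate $Mt\le Mt^{\alpha_n}$ you mention at the end.
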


\begin{proof}Set
\[
L_n:=\operatorname{Lip}(T^n)\qquad\text{and}\qquad M:=\sup_{n\geq1}L_n<\infty.
\]
First suppose that \(M\leq1\). Then every iterate \(T^n\) is nonexpansive. In particular, since $\diam(K)\leq 1$, one readily shows that $T$ satisfies \eqref{eqn:thm3.9} for all sequences $(\alpha_n)$ and $(\kappa_n)$ as in the statement of the theorem. This proves the result when \(M\leq1\). 

Assume now that \(M>1\). Since \(\delta_n\to0\), there exists \(n_0\in\mathbb N\) such that
\begin{equation}\label{eq:delta-small-eventually}
0<\delta_n<\frac1M\qquad(n\geq n_0).
\end{equation}
For every \(n\geq n_0\), define
\begin{equation}\label{eq:alpha-from-original-delta}
1-\alpha_n:=\frac{\log M}{\log(1/\delta_n)}.
\end{equation}
Condition \eqref{eq:delta-small-eventually} gives
\[
\log(1/\delta_n)>\log M>0,
\]
and hence
\[
0<1-\alpha_n<1.
\]
Thus
\[
0<\alpha_n<1\qquad(n\geq n_0).
\]
Furthermore, since \(\delta_n\to0\),
\[
\log(1/\delta_n)\longrightarrow\infty,
\]
so that
\[
\alpha_n\longrightarrow1.
\]
The definition of \(\alpha_n\) also yields the identity
\begin{equation}\label{eq:delta-absorbs-M}
\delta_n^{\,1-\alpha_n}=\exp\left(-\log(1/\delta_n)\frac{\log M}{\log(1/\delta_n)}\right)=\frac1M.
\end{equation}
For \(n\geq n_0\), set
\begin{equation}\label{eq:kappa-off-diagonal}
\kappa_n:=\max\left\{1+\eta_n,\,1+\frac1n\right\}.
\end{equation}
Then \(\kappa_n>1\) and \(\kappa_n\to1\). Fix \(n\geq n_0\) and \(x,y\in K\), and put
\[
d:=\|x-y\|.
\]
The case \(d=0\) is immediate, so assume that \(d>0\). We distinguish two cases. If \(0<d<\delta_n\), then uniform Lipschitzianity gives
\[
\|T^n x-T^n y\|\leq L_n d\leq Md.
\]
Since \(1-\alpha_n>0\) and \(d<\delta_n\),
\[
d^{1-\alpha_n}\leq\delta_n^{1-\alpha_n}=\frac1M
\]
by \eqref{eq:delta-absorbs-M}. Consequently,
\[
Md=M d^{1-\alpha_n}d^{\alpha_n}\leq d^{\alpha_n}.
\]
Hence
\begin{equation}\label{eq:near-diagonal-estimate}
\|T^n x-T^n y\|\leq d^{\alpha_n}\leq \kappa_n d^{\alpha_n}.
\end{equation}
If \(d\geq\delta_n\), then asymptotic nonexpansiveness away from the diagonal gives
\[
\|T^n x-T^n y\|\leq (1+\eta_n)d.
\]
Because \(d\leq1\) and \(0<\alpha_n<1\), we have
\[
d\leq d^{\alpha_n}.
\]
Therefore,
\begin{equation}\label{eq:far-from-diagonal-estimate}
\|T^n x-T^n y\|\leq (1+\eta_n)d^{\alpha_n}\leq \kappa_n d^{\alpha_n}.
\end{equation}
Notice that the boundary case \(d=\delta_n\) is covered by the off-diagonal hypothesis; it could also be handled by \eqref{eq:delta-absorbs-M}. It remains to define \(\alpha_n\) and \(\kappa_n\) for the finitely many indices \(1\leq n<n_0\). Choose arbitrary numbers
\[
\alpha_n\in(0,1),\qquad 1\leq n<n_0,
\]
and set, for example,
\[
\kappa_n:=M+1.
\]
Since \(0\leq d\leq1\), we have \(d\leq d^{\alpha_n}\), and hence
\[
\|T^n x-T^n y\|\leq L_n d\leq Md\leq (M+1)d^{\alpha_n}=\kappa_n d^{\alpha_n}.
\]
Changing finitely many terms does not affect the limits
\[
\alpha_n\to1 \qquad\text{and}\qquad\kappa_n\to1.
\]
Combining the estimates above, we conclude that
\[
\|T^n x-T^n y\|\leq\kappa_n\|x-y\|^{\alpha_n}
\]
for every \(x,y\in K\) and every \(n\geq1\).
\end{proof}

\begin{remark}
\label{rem:distortion-diagonal}
Theorem~\ref{thm:off-diagonal-holder} shows that the relevant issue is not whether the iterates are asymptotically nonexpansive at every scale. A fixed amount of expansion is allowed near the diagonal, provided that
the region where this expansion occurs shrinks sufficiently fast. More precisely, the condition permits $\operatorname{Lip}(T^n)\le M$ with a fixed \(M>1\), while requiring the distortion to be at most \(1+\eta_n\) on pairs whose distance is at least \(\delta_n\). The H\"older exponent absorbs the factor \(M\) on distances below \(\delta_n\).
\end{remark}

\begin{proposition}
\label{prop:holder-implies-type}
Let \(K\subset X\) be bounded, and suppose that
\[
\|T^nx-T^ny\|\le\kappa_n\|x-y\|^{\alpha_n}
\]
for all \(x,y\in K\), where $(\alpha_n)\subset (0,1]$ and $(\kappa_n)\subset (0,\infty)$ satisfy
\[
\kappa_n\longrightarrow1\qquad\text{and}\qquad\alpha_n\longrightarrow1.
\]
Then \(T\) is of asymptotically nonexpansive type, that is,
\[
\limsup_{n\to\infty}\sup_{y\in K}\bigl(\|T^nx-T^ny\|-\|x-y\|\bigr)\le0
\]
for every \(x\in K\).
\end{proposition}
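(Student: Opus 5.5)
The plan is to show that the function $t\mapsto \kappa_n t^{\alpha_n}-t$ tends to zero uniformly on $[0,D]$, where $D=\operatorname{diam}(K)<\infty$. This gives the uniform estimate
\[
\sup_{x,y\in K}\bigl(\|T^nx-T^ny\|-\|x-y\|\bigr)\le \sup_{0\le t\le D}\bigl(\kappa_n t^{\alpha_n}-t\bigr)=:\theta_n ,
\]
and the statement follows once we know $\limsup_n\theta_n\le0$. In fact this yields the stronger, uniform-in-$x$ version of asymptotic nonexpansive type.

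First we control $\theta_n$. We split it as
\[
\kappa_n t^{\alpha_n}-t=(\kappa_n-1)t^{\alpha_n}+(t^{\alpha_n}-t).
\]
The first term is bounded by $|\kappa_n-1|\max\{1,D\}$, since $t^{\alpha_n}\le\max\{1,D\}$ for $t\in[0,D]$ and $\alpha_n\in(0,1]$. This bound tends to $0$.

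For the second term we use elementary calculus. Write $g_n(t)=t^{\alpha_n}-t$ with $\alpha_n<1$; the case $\alpha_n=1$ gives $g_n\equiv0$.
\begin{itemize}
\item On $[1,D]$ we have $g_n\le0$.
\item On $[0,1]$, $g_n\ge0$ and $g_n$ is maximized at $t_n=\alpha_n^{1/(1-\alpha_n)}$. There
\[
g_n(t_n)=\alpha_n^{\alpha_n/(1-\alpha_n)}(1-\alpha_n)\le 1-\alpha_n\to0 .
\]
\end{itemize}
Hence $\sup_{[0,D]}g_n\le 1-\alpha_n$.

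Combining the two pieces gives
\[
\theta_n\le |\kappa_n-1|\max\{1,D\}+(1-\alpha_n)\to0,
\]
which is the claim.

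The only mildly delicate point is that the pointwise convergence $\kappa_n t^{\alpha_n}\to t$ must be upgraded to uniform convergence on $[0,D]$. This is exactly where boundedness of $K$ is used, and it is handled by the explicit maximization above. No earlier result of the paper is needed.
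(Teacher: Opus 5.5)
Your proof is correct. It uses the same reduction as the paper: bound $\sup_{y}(\|T^nx-T^ny\|-\|x-y\|)$ by $\theta_n=\sup_{0\le t\le D}(\kappa_n t^{\alpha_n}-t)$ and show that this tends to $0$.

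The difference lies in how you handle the scalar estimate. The paper runs an $\varepsilon$--$\delta$ argument that splits the \emph{domain*}. On $[0,\delta]$ it uses $\kappa_n t^{\alpha_n}-t\le 2\delta^{\alpha_n}<4\delta$ for large $n$. On $[\delta,D]$ it simply asserts that $\kappa_n t^{\alpha_n}\to t$ uniformly. That claim is true, for instance by the mean value theorem applied to $\alpha\mapsto t^{\alpha}$, with $|\log t|$ bounded on $[\delta,D]$, but the paper does not carry this out.

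You instead split the \emph{function} as $(\kappa_n-1)t^{\alpha_n}+(t^{\alpha_n}-t)$ and compute the exact maximum $\alpha^{\alpha/(1-\alpha)}(1-\alpha)$ of $t^{\alpha}-t$ on $[0,1]$. This yields the explicit rate
\[
\theta_n\le|\kappa_n-1|\max\{1,D\}+(1-\alpha_n).
\]

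Your route is fully self-contained and quantitative, and it makes the uniformity in $x$ and $y$ transparent without any compactness or $\varepsilon$-bookkeeping. The paper's route avoids any computation but leaves the uniform convergence away from $0$ as an unproved claim.
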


\begin{proof}
Let $D=\operatorname{diam}(K)$. It suffices to prove that
\[
\sup_{0\le t\le D}\bigl(\kappa_n t^{\alpha_n}-t\bigr)\longrightarrow0.
\]
Fix \(\varepsilon>0\). Choose \(\delta>0\) sufficiently small that
\[
4\delta<\varepsilon.
\]
Since \(\alpha_n\to1\), for all sufficiently large \(n\),
\[
\delta^{\alpha_n}<2\delta.
\]
Also, since \(\kappa_n\to1\), we may assume that \(\kappa_n<2\). Hence, for \(0\le t\le\delta\),
\[
\kappa_nt^{\alpha_n}-t\le 2\delta^{\alpha_n}<4\delta.
\]
By choosing \(\delta\) smaller initially, this term can be made less than \(\varepsilon\). On the compact interval \([\delta,D]\), the functions
\[
t\longmapsto \kappa_nt^{\alpha_n}
\]
converge uniformly to \(t\). Therefore,
\[
\sup_{\delta\le t\le D} \left|\kappa_nt^{\alpha_n}-t \right|\longrightarrow0.
\]
This proves the desired uniform convergence.
\end{proof}


\section{The fixed point problem for Lin's renorming of \(\ell_1\)}
\label{sec:4}

Let \((\gamma_n)\subset(0,1)\) be an increasing sequence such that \(\gamma_n\to1\), and consider the equivalent norm on 
\(\ell_1\) given by
\begin{equation}\label{eq:Lin-norm}
\|x\|_{\mathrm L}=\sup_{m\geq1}\gamma_m\sum_{j=m}^{\infty}|x_j|,\qquad x=(x_j)\in\ell_1.
\end{equation}
Indeed,
\begin{equation}\label{eq:Lin-equivalence}
\gamma_1\|x\|_1\leq\|x\|_{\mathrm L}\leq\|x\|_1.
\end{equation}

Lin's fixed point theorem \cite[Example 1, Theorem 4]{Lin2} (see also \cite{Lin1} when $\gamma_n=\frac{8^n}{1+8^n}$) states that the renormed space \((\ell_1,\|\cdot\|_{\mathrm L})\) has the FPP for nonexpansive mappings. This makes Lin's renorming a natural test case for the difference between nonexpansiveness and asymptotic H\"older nonexpansiveness. 

\begin{problem}\label{prob:lin-holder}
Do there exist a nonempty, closed, bounded, and convex subset \(K\subset(\ell_1,\|\cdot\|_{\mathrm L})\), a fixed-point-free mapping \(T\colon K\to K\), and sequences
\[
\alpha_n\in(0,1),\qquad\kappa_n\in(1,\infty),\qquad\alpha_n\to1,\qquad\kappa_n\to1,
\]
such that
\begin{equation}\label{eq:variable-holder-Lin}
\|T^n x-T^n y\|_{\mathrm L}\leq\kappa_n\|x-y\|_{\mathrm L}^{\alpha_n}
\end{equation}
for every \(x,y\in K\) and every \(n\geq1\)?
\end{problem}

Lin's result does not directly answer Problem~\ref{prob:lin-holder}. For each fixed \(\alpha<1\), the quantity 
\(d^\alpha\) is larger than \(d\) when \(0<d<1\), so that every H\"older estimate allows expansion at small scales. When 
\(\alpha_n\to1\), however, this additional freedom disappears at every fixed positive scale. The following proposition makes this observation precise.

\subsection{A necessary macroscopic condition}

\begin{proposition}[Asymptotic nonexpansiveness away from the diagonal]
\label{prop:macroscopic}
Let \(K\) be bounded, and suppose that \(T\colon K\to K\) satisfies \eqref{eq:variable-holder-Lin}. Then, for every \(r>0\),
\[
\limsup_{n\to\infty}\sup_{\substack{x,y\in K\\ \|x-y\|_{\mathrm L}\geq r}}\frac{\|T^n x-T^n y\|_{\mathrm L}}{\|x-y\|_{\mathrm L}}\leq1.
\]
Thus \(T\) is asymptotically nonexpansive outside every fixed neighborhood of the diagonal.
\end{proposition}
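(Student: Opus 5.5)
Since $K$ is bounded, we set $D:=\operatorname{diam}_{\mathrm L}(K)<\infty$ and may assume $0<r\le D$; if $r>D$ the supremum is over the empty set and there is nothing to prove. For $x,y\in K$ with $d:=\|x-y\|_{\mathrm L}\in[r,D]$, estimate \eqref{eq:variable-holder-Lin} gives
\[
\frac{\|T^nx-T^ny\|_{\mathrm L}}{\|x-y\|_{\mathrm L}}\le \kappa_n d^{\alpha_n-1}.
\]
The right-hand side no longer depends on $x,y$ except through $d$. It therefore suffices to show that
\[
\sup_{r\le d\le D}\kappa_n d^{\alpha_n-1}\longrightarrow 1\qquad(n\to\infty),
\]
or merely that its $\limsup$ is at most $1$.

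Because $\alpha_n-1\le 0$, the function $d\mapsto d^{\alpha_n-1}$ is nonincreasing on $(0,\infty)$. Its supremum over $[r,D]$ is therefore attained at $d=r$, and equals $r^{\alpha_n-1}=\exp\bigl((1-\alpha_n)\log(1/r)\bigr)$. Since $\alpha_n\to1$ and $r>0$ is fixed, this tends to $1$. Combined with $\kappa_n\to1$, we get
\[
\limsup_{n\to\infty}\sup_{\substack{x,y\in K\\ \|x-y\|_{\mathrm L}\ge r}}\frac{\|T^nx-T^ny\|_{\mathrm L}}{\|x-y\|_{\mathrm L}}\le\lim_{n\to\infty}\kappa_n r^{\alpha_n-1}=1.
\]

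There is no real obstacle here. The only point needing care is the direction of monotonicity. When $r<1$, the factor $r^{\alpha_n-1}$ exceeds $1$, but it converges to $1$ precisely because $r$ is fixed and bounded away from $0$. This is why the conclusion holds only outside a fixed neighborhood of the diagonal and is not uniform as $r\downarrow0$. When $r\ge1$, the factor is at most $1$ and the bound is immediate. Boundedness of $K$ is used only to guarantee that the relevant distances lie in a compact set; the monotonicity argument does not actually need the upper bound $D$.
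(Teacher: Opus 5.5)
Your proof is correct and follows essentially the same route as the paper: bound the ratio by $\kappa_n\|x-y\|_{\mathrm L}^{\alpha_n-1}$, use $\alpha_n-1<0$ to replace $\|x-y\|_{\mathrm L}$ by $r$, and let $\kappa_n r^{\alpha_n-1}\to1$. You are also right that boundedness of $K$ plays no real role in the argument; the paper does not invoke it either.
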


\begin{proof}
If \(\|x-y\|_{\mathrm L}\geq r\), then since \(\alpha_n-1<0\), we have
\[
\frac{\|T^n x-T^n y\|_{\mathrm L}}{\|x-y\|_{\mathrm L}}\leq\kappa_n\|x-y\|_{\mathrm L}^{\alpha_n-1}\leq \kappa_n r^{\alpha_n-1}.
\]
It follows that
\[
\sup_{\substack{x,y\in K\\ \|x-y\|_{\mathrm L}\geq r}}\frac{\|T^n x-T^n y\|_{\mathrm L}}{\|x-y\|_{\mathrm L}}\leq\kappa_n r^{\alpha_n-1}\longrightarrow1.
\]
\end{proof}

In particular, any affirmative solution to Problem~\ref{prob:lin-holder} must be asymptotically nonexpansive at
every fixed positive scale. Any expansion that persists asymptotically must therefore be confined to a neighborhood of the diagonal whose radius tends to zero.

\begin{remark}
Although Proposition~\ref{prop:macroscopic} is formulated in terms of Lin's norm, its proof is entirely metric. More precisely, the same conclusion holds in any metric space after replacing $\|\cdot\|_{\mathrm L}$ by the corresponding metric. In particular,
no linear or geometric property specific to Lin's renorming is used.
\end{remark}

\subsection{Obstructions to shift-type constructions}

Let
\[
K=\left\{x=(x_j)\in\ell_1:x_j\geq 0,\quad\sum_{j=1}^{\infty}x_j=1\right\}.
\]
By \eqref{eq:Lin-equivalence}, the set \(K\) is closed, bounded, and convex in \((\ell_1,\|\cdot\|_{\mathrm L})\). Define the shift-type maps \(S,W\colon K \to K\) by
\[
S(x_1,x_2,x_3,\ldots)=(0,x_1,x_2,x_3,\ldots)
\]
and
\[
W(x):=\Bigg(1-\sum_{j=1}^{\infty}w_jx_j\Bigg)e_1+\sum_{j=1}^{\infty}w_jx_je_{j+1},\qquad w_j:=\frac{\gamma_j}{\gamma_{j+1}}.
\]
Both mappings have no fixed points. However, neither of them can solve Problem~\ref{prob:lin-holder}.

\begin{proposition}
\label{prop:shift-obstructions}
The following assertions hold.
\begin{enumerate}
    \item The canonical right shift $S$ is not asymptotically
    H\"older nonexpansive.

    \item The balanced weighted shift $W$ is affine and fixed point
    free, and satisfies
    \[
     \sup_{n\geq1}\operatorname{Lip}_{\mathrm L}(W^n)\leq\frac1{\gamma_1}.
    \]
    Nevertheless, $W$ is not asymptotically H\"older nonexpansive.
\end{enumerate}
\end{proposition}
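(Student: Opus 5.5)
Both non-existence claims will follow from Proposition~\ref{prop:macroscopic}. If a self-map of \(K\) is asymptotically H\"older nonexpansive, it satisfies \eqref{eq:variable-holder-Lin} with \(\kappa_n=1+\varepsilon_n\to1\). Hence, for every fixed \(r>0\), the ratio \(\|T^nx-T^ny\|_{\mathrm L}/\|x-y\|_{\mathrm L}\) has \(\limsup\le1\) over pairs at distance \(\ge r\). It therefore suffices to exhibit a single fixed pair \(x\neq y\) in \(K\) along whose orbits this ratio tends to a limit strictly larger than \(1\). In both cases the natural pair is \(x=e_1\), \(y=e_2\). Directly from \eqref{eq:Lin-norm},
\[
\|e_1-e_2\|_{\mathrm L}=\max\{2\gamma_1,\gamma_2\}>0 .
\]

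\textbf{Item (1).} Here \(S^ne_1-S^ne_2=e_{n+1}-e_{n+2}\), whose Lin norm is \(\max\{2\gamma_{n+1},\gamma_{n+2}\}\to2\). Since \(\gamma_1,\gamma_2<1\), the limit ratio \(2/\max\{2\gamma_1,\gamma_2\}\) exceeds \(1\), and Proposition~\ref{prop:macroscopic} gives the contradiction. That \(S\) has no fixed point is clear, because a fixed point would have every coordinate equal to \(x_1=0\).

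\textbf{Item (2), affine map and no fixed point.} \(W\) is affine by its formula. Since \(0<w_j<1\), it maps \(K\) into \(K\). A fixed point would satisfy \(x_{j+1}=w_jx_j\), so by telescoping \(x_{j+1}=x_1\gamma_1/\gamma_{j+1}\ge x_1\gamma_1\). Summability then forces \(x_1=0\), hence \(x=0\), which contradicts \(\sum x_j=1\).

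\textbf{Item (2), failure of the H\"older condition.} By induction, the mass of \(W^ne_j\) sitting on the far coordinate \(e_{j+n}\) is
\[
w_j\cdots w_{j+n-1}=\frac{\gamma_j}{\gamma_{j+n}} .
\]
The remaining mass \(1-\gamma_j/\gamma_{j+n}\) lies on coordinates \(\le n\). Evaluate the Lin norm of \(W^ne_1-W^ne_2\) at \(m=n+1\). Only the two far coordinates \(e_{n+1}\) and \(e_{n+2}\) survive, giving the lower bound
\[
\|W^ne_1-W^ne_2\|_{\mathrm L}\ \ge\ \gamma_{n+1}\Bigl(\frac{\gamma_1}{\gamma_{n+1}}+\frac{\gamma_2}{\gamma_{n+2}}\Bigr)\longrightarrow\gamma_1+\gamma_2 .
\]
Since \(\gamma_1<\gamma_2\), we have \(\gamma_1+\gamma_2>\max\{2\gamma_1,\gamma_2\}\). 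The ratio therefore tends to something strictly above \(1\) at a fixed positive distance, again contradicting Proposition~\ref{prop:macroscopic}.

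\textbf{Item (2), the uniform Lipschitz bound.} This is the step I expect to be the main obstacle. Since \(W\) is affine, \(W^nx-W^ny=V^n(x-y)\), where \(V\) is the linear map
\[
V z=\sum_j w_jz_j\,(e_{j+1}-e_1),
\]
acting on vectors with coordinate sum zero. In view of \eqref{eq:Lin-equivalence}, it suffices to prove
\[
\|V^nz\|_{\mathrm L}\le\|z\|_1 .
\]
I would first reduce to a single coordinate vector by the triangle inequality and then estimate \(\|V^ne_j\|_{\mathrm L}\le1\) directly. In \(V^ne_j\), positive mass \(\gamma_j/\gamma_{j+n}\) sits at \(e_{j+n}\), and the signed mass that returns to \(e_1\) is pushed forward again by the weights. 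The weights \(w_k=\gamma_k/\gamma_{k+1}\) are balanced precisely so that \(\gamma_m\) times the tail mass beyond any position \(m\) telescopes to at most \(1\). The plan is to verify this telescoping identity coordinatewise by induction on \(n\), tracking the tail sums \(\gamma_m\sum_{i\ge m}|(V^ne_j)_i|\). If the signed cancellations make the induction awkward, the fallback is the cruder route of bounding \(\|W^nu\|_{\mathrm L}\le1\) for each \(u\in K\). One would then handle the positive and negative parts of \(x-y\), normalized to lie in \(K\), separately, losing at most the factor \(1/\gamma_1\).
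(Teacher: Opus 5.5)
\textbf{Matches the paper except at one step.} Your arguments for item (1), for the affineness and fixed-point-freeness of $W$, and for the failure of the H\"older condition for $W$ are the paper's. The paper uses a general pair $e_i,e_j$, resp.\ $e_1,e_j$, but the mass bookkeeping and the evaluation of the Lin norm on the tail from coordinate $n+1$ are the same.

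One small repair: you use $\gamma_1<\gamma_2$, which needs strict monotonicity. If $\gamma_1=\gamma_2$, then $\gamma_1+\gamma_2=\max\{2\gamma_1,\gamma_2\}$ and your bound gives no contradiction. As the paper does, use $(e_1,e_j)$ with $j$ chosen so that $\gamma_j>\gamma_1$; such $j$ exists because $\gamma_j\to1$.

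\textbf{The primary plan for the Lipschitz bound fails.} The estimate $\|V^ne_j\|_{\mathrm L}\le1$ is false. Already for $n=j=1$, $Ve_1=w_1(e_2-e_1)$ gives
\[
\|Ve_1\|_{\mathrm L}=\frac{\gamma_1}{\gamma_2}\max\{2\gamma_1,\gamma_2\},
\]
which equals $130/81>1$ for Lin's original choice $\gamma_n=8^n/(1+8^n)$. No choice of $(\gamma_n)$ rescues it. Since $Ve_j=w_j(e_{j+1}-e_1)$ is a multiple of a difference of two points of $K$, you have $V^ne_j=w_j\bigl(W^{n-1}e_{j+1}-W^{n-1}e_1\bigr)$. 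Your own tail computation from the H\"older part, now taken at $m=n$, gives
\[
\|V^ne_j\|_{\mathrm L}\ge w_j\Bigl(\gamma_1+\frac{\gamma_n\gamma_{j+1}}{\gamma_{j+n}}\Bigr).
\]
This tends to $w_j(\gamma_1+\gamma_{j+1})$ as $n\to\infty$, and that tends to $1+\gamma_1>1$ as $j\to\infty$.

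The cause is structural. The vector $e_j$ is not zero-sum, so decomposing $z$ coordinatewise discards exactly the cancellation that the zero-sum constraint provides. The balancing $w_k=\gamma_k/\gamma_{k+1}$ only makes $\gamma_k$ times the forward-moving mass of a single packet invariant along the orbit. It gives no bound by $1$ for the weighted tail of a signed difference of two packets, and $V^ne_j$ is $w_j$ times such a difference.

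\textbf{Your fallback works and is the paper's argument.} Set $z=x-y$. Since $\sum_jz_j=0$, we get $\|z^+\|_1=\|z^-\|_1=\|z\|_1/2=:s$. For $s>0$ the vectors $u=z^+/s$ and $v=z^-/s$ lie in $K$. Hence $W^nx-W^ny=V^nz=s\,(W^nu-W^nv)$, and
\[
\|W^nx-W^ny\|_{\mathrm L}\le s\bigl(\|W^nu\|_1+\|W^nv\|_1\bigr)=\|x-y\|_1\le\frac1{\gamma_1}\|x-y\|_{\mathrm L}.
\]
The paper phrases the same fact as: the linear extension $P$ of $W$ is positive and preserves the $\ell_1$-mass of positive vectors, so $\|P^nh\|_1\le\|h\|_1$. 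Drop the coordinatewise induction and make this the proof.
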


\begin{proof}
For the first assertion, fix $1\leq i<j$. From the definition of
Lin's norm,
\[
\|e_i-e_j\|_{\mathrm L}=\max\{2\gamma_i,\gamma_j\}=:d_{i,j}<2.
\]
On the other hand,
\[
S^ne_i=e_{i+n},\qquad S^ne_j=e_{j+n},
\]
and hence
\[
\|S^ne_i-S^ne_j\|_{\mathrm L}=\max\{2\gamma_{i+n},\gamma_{j+n}\}\longrightarrow2.
\]
If $S$ satisfied
\[
\|S^nx-S^ny\|_{\mathrm L}\leq \kappa_n\|x-y\|_{\mathrm L}^{\alpha_n},\qquad\kappa_n\to1,\quad\alpha_n\to 1,
\]
then, by applying this estimate to $e_i$ and $e_j$ and passing to the limit, we would obtain $2\leq d_{i,j}$, contradicting $d_{i,j}<2$.

\vskip .1cm 

We now consider $W$. Since $0<w_j\leq1$, the mapping $W$ preserves
nonnegativity and total mass, so $W(K)\subset K$. It is also affine.
If $Wx=x$, then
\[
x_{j+1}=w_jx_j
\]
for every $j\geq1$. Consequently,
\[
x_j=x_1\prod_{r=1}^{j-1}w_r=x_1\frac{\gamma_1}{\gamma_j}.
\]
Since $\gamma_j\to1$, this sequence cannot belong to $\ell_1$ unless
$x_1=0$. In the latter case, $x=0\notin K$. Thus $W$ is fixed point
free. Let $P\colon\ell_1\to\ell_1$ be the associated linear operator,
defined by
\[
Ph=\left(\sum_{j=1}^{\infty}(1-w_j)h_j\right)e_1+\sum_{j=1}^{\infty}w_jh_je_{j+1}.
\]
The operator $P$ is positive and preserves the $\ell_1$-mass of
positive vectors. Therefore, $\|P^nh\|_1\leq\|h\|_1$ for every $h\in\ell_1$ and every $n\geq1$. Since
\[
\gamma_1\|h\|_1 \leq\|h\|_{\mathrm L} \leq\|h\|_1,
\]
we obtain
\[
\|W^nx-W^ny\|_{\mathrm L}=\|P^n(x-y)\|_{\mathrm L}\leq\frac1{\gamma_1}\|x-y\|_{\mathrm L}.
\]
Thus
\[
\sup_{n\geq1}\operatorname{Lip}_{\mathrm L}(W^n)\leq\frac1{\gamma_1}.
\]
It remains to prove that $W$ is not asymptotically H\"older
nonexpansive. Choose $j>1$ such that $\gamma_j>\gamma_1$, and set
\[
d_j:=\|e_1-e_j\|_{\mathrm L}=\max\{2\gamma_1,\gamma_j\}.
\]
The mass starting at the $i$-th coordinate and undergoing no return to the first coordinate during the first $n$ iterations is
\[
p_{i,n}:=\prod_{r=i}^{i+n-1}w_r=\frac{\gamma_i}{\gamma_{i+n}}.
\]
All trajectories undergoing at least one return are supported in the
first $n$ coordinates after $n$ iterations. Hence, by considering the
tail beginning at the $(n+1)$-st coordinate,
\begin{align*}
    \|W^ne_1-W^ne_j\|_{\mathrm L}
    &\geq
    \gamma_{n+1}\bigl(p_{1,n}+p_{j,n}\bigr)\\
    &=
    \gamma_1+
    \frac{\gamma_{n+1}\gamma_j}{\gamma_{n+j}}.
\end{align*}
It follows that
\[
    \liminf_{n\to\infty}
    \|W^ne_1-W^ne_j\|_{\mathrm L}
    \geq
    \gamma_1+\gamma_j.
\]
Since $\gamma_j>\gamma_1$,
\[
\gamma_1+\gamma_j>\max\{2\gamma_1,\gamma_j\}=d_j.
\]
Thus $W$ exhibits persistent expansion at the fixed positive scale $d_j$, contradicting the necessary condition in
Proposition~\ref{prop:macroscopic}. Therefore $W$ is not asymptotically H\"older nonexpansive.
\end{proof}

\subsection{A sufficient construction strategy}
\label{subsec:lin-sufficient-strategy}

Proposition~\ref{prop:macroscopic} gives a necessary condition for an affirmative solution of Problem~\ref{prob:lin-holder}: the iterates must become asymptotically nonexpansive at every fixed positive scale. Theorem~\ref{thm:off-diagonal-holder}, proved in the preceding section, shows that a suitably uniform version of this condition is also sufficient when combined with uniform Lipschitz control. We thus do not restate that theorem here, but record its consequences for Lin's renorming. More precisely, a sufficient route to solving Problem~\ref{prob:lin-holder} is to construct a nonempty, closed, bounded, and convex set
\[
K\subset(\ell_1,\|\cdot\|_{\mathrm L}),\qquad\operatorname{diam}_{\mathrm L}(K)\leq1,
\]
and a fixed-point-free mapping \(T\colon K\to K\) satisfying the following two conditions:
\begin{equation}\label{eq:Lin-uniform-Lipschitz-route}
\sup_{n\geq1}\operatorname{Lip}_{\mathrm L}(T^n)<\infty,
\end{equation}
and there exist null positive sequences \((\delta_n)\) and \((\eta_n)\) such that
\begin{equation}\label{eq:Lin-off-diagonal-route}
\|T^nx-T^ny\|_{\mathrm L}\leq (1+\eta_n)\|x-y\|_{\mathrm L}
\end{equation}
whenever
\[
\|x-y\|_{\mathrm L}\geq\delta_n.
\]
Indeed, Theorem~\ref{thm:off-diagonal-holder} would then provide
sequences
\[
    \alpha_n\in(0,1),
    \qquad
    \kappa_n>1,
    \qquad
    \alpha_n\to1,
    \qquad
    \kappa_n\to1,
\]
such that
\[
    \|T^nx-T^ny\|_{\mathrm L}
    \leq
    \kappa_n\|x-y\|_{\mathrm L}^{\alpha_n}
\]
for every \(x,y\in K\) and every \(n\geq1\). Since \(T\) is fixed point
free, this would give an affirmative answer to Problem~\ref{prob:lin-holder}. The metric content of this strategy can be expressed through the
off-diagonal Lipschitz profile
\[
    \Lambda_n(r)
    :=
    \sup_{\substack{x,y\in K\\
                    \|x-y\|_{\mathrm L}\geq r}}
    \frac{\|T^nx-T^ny\|_{\mathrm L}}
         {\|x-y\|_{\mathrm L}},
    \qquad r>0.
\]
Condition \eqref{eq:Lin-off-diagonal-route} amounts to finding a
sequence \(\delta_n\to0\) such that
\[
    \Lambda_n(\delta_n)\leq1+\eta_n
    \qquad\text{with}\qquad
    \eta_n\to0.
\]
Thus the desired mapping need not be asymptotically nonexpansive
uniformly over all distinct pairs. Equivalently, it is enough to find a sequence $\delta_n\to0$ such
that
\[
\limsup_{n\to\infty}\Lambda_n(\delta_n)\leq1,
\]
or, equivalently,
\[
\bigl(\Lambda_n(\delta_n)-1\bigr)_+\longrightarrow0.
\]
Thus the macroscopic distortion need not converge exactly to one; it may remain strictly below one. The bound
\eqref{eq:Lin-uniform-Lipschitz-route} controls those pairs near the diagonal, and the H\"older exponent supplied by
Theorem~\ref{thm:off-diagonal-holder} absorbs the resulting fixed Lipschitz factor. This reformulation also clarifies the obstructions established in Proposition~\ref{prop:shift-obstructions}. Both the canonical right shift \(S\) and the balanced
weighted shift \(W\) admit pairs at a fixed positive distance whose images undergo persistent asymptotic expansion. Consequently, for neither mapping can one find a sequence \(\delta_n\to0\) such that the corresponding off-diagonal profiles satisfy
\[
\limsup_{n\to\infty}\Lambda_n(\delta_n)\leq 1.
\]
A successful construction must therefore be genuinely scale dependent: the region in which expansion larger than \(1+\eta_n\) occurs must move toward the diagonal as \(n\to\infty\). Accordingly, Theorem~\ref{thm:off-diagonal-holder} isolates the remaining construction problem but does not solve it. One must still produce, in Lin's renormed \(\ell_1\), a single fixed-point-free
uniformly Lipschitzian mapping whose macroscopic distortion improves along its iterates. The periodic construction discussed in the next subsection provides useful uniformly Lipschitzian dynamics, but its exact periodicity prevents the required asymptotic improvement.

\subsection{Comparison with the Acuaviva--Acuaviva construction}

Acuaviva and Acuaviva \cite{AA} construct, on a broad class of decomposable Banach spaces, a fixed-point-free mapping \(U\) on the unit ball such that
\[
\inf_x\|x-Ux\|=0\qquad\text{and}\qquad\sup_{n\geq1}\operatorname{Lip}(U^n)<\infty.
\]
A distinctive feature of their construction is the algebraic
periodicity $U^2=S$, $U^3=U$, which implies
\[
    U^{2m}=S,
    \qquad
    U^{2m+1}=U
    \qquad(m\geq1).
\]
This periodicity is effective for controlling all iterates by a common Lipschitz constant, but it becomes an obstruction when both
\(\alpha_n\) and \(\kappa_n\) are required to converge to \(1\).

\begin{proposition}
\label{prop:periodic-obstruction}
Let \(K\) have the fixed point property for nonexpansive mappings, and let \(T\colon K\to K\). Suppose that there is an unbounded sequence \((n_j)\) such that
\[
T^{n_j}=T.
\]
If
\[
\|T^n x-T^n y\|\leq\kappa_n\|x-y\|^{\alpha_n},\qquad\alpha_n\to1,\qquad\kappa_n\to1,
\]
then \(T\) has a fixed point.
\end{proposition}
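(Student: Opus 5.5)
The plan is to show that $T$ itself is nonexpansive, and then invoke the fixed point property of $K$. The mechanism is that periodicity along $(n_j)$ lets us replace $T$ by the iterate $T^{n_j}$, whose Hölder data $(\kappa_{n_j},\alpha_{n_j})$ tend to $(1,1)$. Concretely, since $(n_j)$ is unbounded, we may pass to a subsequence and assume $n_j\to\infty$. Then for all $x,y\in K$ and all $j$,
\[
\|Tx-Ty\|=\|T^{n_j}x-T^{n_j}y\|\leq\kappa_{n_j}\|x-y\|^{\alpha_{n_j}}.
\]

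Fix $x,y\in K$ and put $d=\|x-y\|$. If $d=0$ there is nothing to prove. If $d>0$, then $d^{\alpha_{n_j}}\to d$ as $j\to\infty$, because $t\mapsto d^{t}$ is continuous in $t$ and $\alpha_{n_j}\to1$. Also $\kappa_{n_j}\to1$. Letting $j\to\infty$ in the display therefore gives $\|Tx-Ty\|\leq\|x-y\|$. Note that $d$ is fixed here, so no uniformity in $d$ is needed and boundedness of $K$ plays no role. Thus $T$ is nonexpansive on $K$, and the fixed point property for nonexpansive mappings yields a fixed point of $T$.

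There is no real obstacle; the argument is essentially one line. The only points needing care are these:
\begin{itemize}
\item making sure the sequence $n_j$ genuinely tends to infinity, which is why we pass to a subsequence of the unbounded sequence;
\item observing that the limit is taken pointwise in the pair $(x,y)$, so the Hölder defect at small scales (which is not uniform) never enters.
\end{itemize}
In the application to the Acuaviva--Acuaviva map, $n_j=2j+1$ satisfies $U^{n_j}=U$. So if $K$ has the FPP for nonexpansive maps, as with Lin's renorming via \cite{Lin2}, then $U$ cannot simultaneously be fixed point free and satisfy the estimate with $\alpha_n,\kappa_n\to1$.
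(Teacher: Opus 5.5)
Your proof is correct and is essentially the paper's argument. You use $T^{n_j}=T$ to bound $\|Tx-Ty\|$ by $\kappa_{n_j}\|x-y\|^{\alpha_{n_j}}$, let $j\to\infty$ pointwise in $(x,y)$ to get that $T$ is nonexpansive, and then invoke the fixed point property of $K$. The only addition is that you explicitly pass to a subsequence with $n_j\to\infty$, which the paper leaves implicit.
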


\begin{proof}
For every \(x,y\in K\),
\[
\|Tx-Ty\|=\|T^{n_j}x-T^{n_j}y\|\leq\kappa_{n_j}\|x-y\|^{\alpha_{n_j}}.
\]
Letting \(j\to\infty\), we obtain $\|Tx-Ty\|\leq\|x-y\|$. Thus \(T\) is nonexpansive and consequently has a fixed point.
\end{proof}

\subsection{Conclusion and remaining problem}

The results of this section delimit the principal obstruction to a fixed-point-free construction in Lin's renormed $\ell_1$. The gauge criterion of Theorem~\ref{thm:l1-gauge-criterion} relies on the exact splitting property of the canonical $\ell_1$-norm and therefore does not transfer directly to $\|\cdot\|_{\mathrm L}$. On the other hand, Proposition~\ref{prop:shift-obstructions} rules out the shift-type maps, while Proposition~\ref{prop:periodic-obstruction} shows that an exactly periodic construction of the Acuaviva--Acuaviva type cannot satisfy variable-exponent estimates with both $\alpha_n\to1$ and $\kappa_n\to1$. Theorem~\ref{thm:off-diagonal-holder} nevertheless provides a concrete sufficient mechanism. It would be enough to construct a
fixed-point-free uniformly Lipschitzian mapping whose iterates become asymptotically nonexpansive outside a neighborhood of the diagonal whose radius tends to zero. Thus the remaining problem is not merely to modify the standard shift, but to produce dynamics whose macroscopic distortion genuinely improves along the iterates.

It remains open whether there exist a nonempty closed bounded convex
set
\[
K\subset(\ell_1,\|\cdot\|_{\mathrm L})
\]
and a fixed-point-free mapping $T\colon K\to K$ satisfying
\[
\|T^nx-T^ny\|_{\mathrm L}\leq\kappa_n\|x-y\|_{\mathrm L}^{\alpha_n},\qquad\alpha_n\to1,\quad\kappa_n\to1.
\]


\section{Examples}\label{sec:5}

This section provides examples of mappings satisfying Definition \ref{def:1.1}. Our goal is to illustrate the versatility of this class and to show that it encompasses operators failing classical regularity conditions, such as global continuity or Lipschitzianity.

\smallskip 

\begin{example}[\cite{DBen1}]\label{ex:3.1}
Let $T : [0, 1] \to [0, 1]$ be defined by $T(x) = 0$ if $x < 1$ and $T(1) = 1/2$. Then $T$ is clearly a discontinuous mapping which satisfies $T^n(x) = 0$ for all $n \ge 2$. Since \(T^n=0\) for every \(n\geq2\), the mapping \(T\) is of
asymptotically H\"older nonexpansive type. Notice, however, that \(T\) is not asymptotically H\"older nonexpansive in the sense of
Definition~\ref{def:1.1}(i).
\end{example}


\begin{example}[A non-Lipschitzian mapping which is asymptotically H\"older nonexpansive]\label{ex:3.2}
Let \(0<r\leq1/2\), and consider the positive part of the closed ball of radius \(r\) in \(\ell_2\):
\[
K:=B_{\ell_2^+}(r)=\left\{x=(t_j)\in\ell_2:t_j\geq0,\quad \|x\|_2\leq r\right\}.
\]
Let \((\alpha_n)\subset(0,1)\) be nondecreasing and satisfy \(\alpha_n\to1\). Choose a sequence \((B_n)_{n\geq2}\subset(0,1)\)
such that
\begin{equation}\label{eq:product-B-zero}
\prod_{n=2}^{\infty}B_n=0.
\end{equation}
For \(n\geq2\), put
\[
A_n:=\alpha_{n-1}B_n\qquad\text{and}\qquad p_n:=\frac{\alpha_n}{\alpha_{n-1}}.
\]
Since \((\alpha_n)\) is nondecreasing, \(p_n\geq1\). Define \(T\colon K\to\ell_2\) by
\begin{equation}\label{eq:example-32-map}
T(x)=(r-\|x\|_2)t_1^{\alpha_1}e_2+\sum_{n=2}^{\infty}A_n t_n^{p_n}e_{n+1},\qquad x=(t_n)\in K.
\end{equation}
We first verify that \(T(K)\subset K\). Since \(0\leq t_n\leq r\leq1\), we have
\[
0\leq A_nt_n^{p_n}\leq t_n\qquad(n\geq2).
\]
Moreover,
\[
0\leq (r-\|x\|_2)t_1^{\alpha_1}\leq r-\|x\|_2.
\]
Consequently,
\begin{align*}
\|T(x)\|_2^2&=(r-\|x\|_2)^2t_1^{2\alpha_1}+\sum_{n=2}^{\infty}A_n^2t_n^{2p_n}\\
&\leq (r-\|x\|_2)^2+\sum_{n=2}^{\infty}t_n^2\leq (r-\|x\|_2)^2+\|x\|_2^2.
\end{align*}
Therefore \(\|T(x)\|_2\leq r\), and all the coordinates of \(T(x)\) are nonnegative. Thus \(T(x)\in K\). Notice also that 
\(T(0)=0\). Nevertheless, \(T\) is not Lipschitz. Indeed, for \(0<t<r\), let \(x_t=te_1\). Then
\[
T(x_t)=(r-t)t^{\alpha_1}e_2,\qquad T(0)=0,
\]
and hence
\[
\frac{\|T(x_t)-T(0)\|_2}{\|x_t\|_2}=(r-t)t^{\alpha_1-1}\longrightarrow\infty\qquad (t\downarrow0).
\]
We next estimate the iterates of \(T\). For \(n\geq2\), define
\[
\varphi_n(t):=A_nt^{p_n},\qquad 0\leq t\leq r.
\]
Since
\[
\varphi_n'(t)=A_np_nt^{p_n-1}=\alpha_nB_n t^{p_n-1},
\]
and \(0\leq t\leq r\leq1\), we have
\[
0\leq\varphi_n'(t)\leq B_n.
\]
It follows that
\begin{equation}\label{eq:example-32-scalar-Lipschitz}
|\varphi_n(t)-\varphi_n(s)|\leq B_n|t-s|\qquad(s,t\in[0,r]).
\end{equation}
A direct induction gives, for \(m\geq2\),
\begin{align}
T^m(x)&=\left(\prod_{i=2}^{m}A_i^{\alpha_m/\alpha_i}\right)(r-\|x\|_2)^{\alpha_m/\alpha_1}t_1^{\alpha_m}e_{m+1}\notag\\
&\hskip 1cm +\sum_{k=2}^{\infty}\left(\prod_{i=k}^{k+m-1}A_i^{\alpha_{k+m-1}/\alpha_i}\right)t_k^{\alpha_{k+m-1}/\alpha_{k-1}}e_{k+m}.
\label{eq:example-32-iterates}
\end{align}
Let \(x=(t_k)\) and \(y=(s_k)\) belong to \(K\), and put $d:=\|x-y\|_2$. Since \(\operatorname{diam}(K)\leq2r\leq1\), we have \(0\leq d\leq1\). We first estimate the term in \eqref{eq:example-32-iterates} arising from the first coordinate. Set
\[
q_m:=\frac{\alpha_m}{\alpha_1}.
\]
For $a=r-\|x\|_2$ and $b=r-\|y\|_2$, we have \(a,b\in[0,1]\), and $|a-b|\leq d$. Therefore,
\begin{align*}
\left|a^{q_m}t_1^{\alpha_m}-b^{q_m}s_1^{\alpha_m}\right|
&\leq |a^{q_m}-b^{q_m}|t_1^{\alpha_m}+b^{q_m}|t_1^{\alpha_m}-s_1^{\alpha_m}|\\[1mm]
&\leq q_m|a-b|+|t_1-s_1|^{\alpha_m}\\[1mm]
&\leq\frac1{\alpha_1}d+d^{\alpha_m}.
\end{align*}
Since \(0\leq d\leq1\) and \(\alpha_m<1\), we have \(d\leq d^{\alpha_m}\). Hence
\begin{equation*}\label{eq:example-32-first-term}
\left|a^{q_m}t_1^{\alpha_m}-b^{q_m}s_1^{\alpha_m}\right|
\leq \left(\frac1{\alpha_1}+1\right)d^{\alpha_m}.
\end{equation*}
Moreover, since \(A_i\leq B_i<1\) and \(\alpha_m/\alpha_i\geq1\) for \(2\leq i\leq m\),
\[
\prod_{i=2}^{m}A_i^{\alpha_m/\alpha_i}\leq\prod_{i=2}^{m}B_i.
\]
Thus the difference between the \((m+1)\)-st coordinates of \(T^m(x)\) and \(T^m(y)\) is bounded by
\begin{equation}\label{eq:example-32-first-coordinate-final}
\left(\frac1{\alpha_1}+1\right)\left(\prod_{i=2}^{m}B_i\right) d^{\alpha_m}.
\end{equation}
For \(k\geq2\), the contribution of the \(k\)-th coordinate to the \(m\)-th iterate is obtained by composing $\varphi_k,\varphi_{k+1},\ldots,\varphi_{k+m-1}$. It follows from \eqref{eq:example-32-scalar-Lipschitz} that
\begin{equation}\label{eq:example-32-tail-coordinate}
\left|(\varphi_{k+m-1}\circ\cdots\circ\varphi_k)(t_k)-(\varphi_{k+m-1}\circ\cdots\circ\varphi_k)(s_k)\right|\leq\left(\prod_{i=k}^{k+m-1}B_i\right)|t_k-s_k|.
\end{equation}
In particular, because \(0<B_i<1\),
\[
\left|(\varphi_{k+m-1}\circ\cdots\circ\varphi_k)(t_k)-(\varphi_{k+m-1}\circ\cdots\circ\varphi_k)(s_k)\right|\leq|t_k-s_k|.
\]
Combining \eqref{eq:example-32-first-coordinate-final} and \eqref{eq:example-32-tail-coordinate}, we obtain
\begin{align*}
\|T^m(x)-T^m(y)\|_2^2
&\leq\left(\frac1{\alpha_1}+1\right)^2\left(\prod_{i=2}^{m}B_i^2\right)d^{2\alpha_m}+\sum_{k=2}^{\infty}\left(\prod_{i=k}^{k+m-1}B_i^2\right)|t_k-s_k|^2\\
&\leq\left(\frac1{\alpha_1}+1\right)^2\left(\prod_{i=2}^{m}B_i^2\right)d^{2\alpha_m}+d^2.
\end{align*}
Since \(d\leq d^{\alpha_m}\), it follows that
\begin{align*}
\|T^m(x)-T^m(y)\|_2&\leq\left[1+\left(\frac1{\alpha_1}+1\right)\prod_{i=2}^{m}B_i\right]d^{\alpha_m}.
\end{align*}
By \eqref{eq:product-B-zero},
\[
\kappa_m:=1+\left(\frac1{\alpha_1}+1\right)\prod_{i=2}^{m}B_i\longrightarrow 1.
\]
Therefore,
\[
\|T^m(x)-T^m(y)\|_2\leq \kappa_m\|x-y\|_2^{\alpha_m},
\]
and \(T\) is asymptotically H\"older nonexpansive, although it is not
Lipschitz.
\end{example}

\smallskip 

The approach in \cite{DBen1} shows that the class of asymptotically H\"older nonexpansive type maps strictly contains those that are asymptotically H\"older nonexpansive. 

\begin{example}[{\cite[Example 3.3]{DBen1}}]\label{ex:3.3} Let $(e_i)$ and $\ell_2$ be as above, set $u_n=e_n/2^n$, and consider the compact convex set $K=\overline{\mathrm{co}}\{u_n\colon n\in\mathbb{N}\}$. Then every $x=(x_k)\in K$ has the representation $x=\sum_{n=1}^\infty \lambda_k u_k$, where $0\leq \lambda_n\leq 1$, $\sum_{n=1}^\infty \lambda_n\leq 1$ and $x(n)=\lambda_n/2^n$. Denote $P=\{n\in\mathbb{N}\colon n=2^j \text{ for } j\in\mathbb{N}\}$ and $Q=\mathbb{N}\setminus P$. Also, write $\mu_k=0$ if $\lambda_k<1$ and $\mu_k=1/2$ if $\lambda_k=1$. The mapping $T\colon K\to K$ defined by
\[
T\Bigg( \sum_{n=1}^\infty \lambda_n u_n\Bigg)= \sum_{n\in Q} \lambda_n u_{n+1} + \sum_{n\in P} \mu_n u_{n+1},
\]
has none of its positive iterates continuous and is of asymptotically H\"older nonexpansive type. 
\end{example}

Next, we provide a non-Lipschitzian mapping that is uniformly asymptotically H\"older $\kappa$-Lipschitz, but which does not satisfy condition (i) of Definition \ref{def:1.1}.

\begin{example}\label{ex:3.4}
Let $K:=\big\{x=(t_n)\in c_0:0\leq t_n\leq1\ \text{for every }n\big\}$. Let \((\alpha_n)\subset(0,1)\) be strictly increasing and satisfy \(\alpha_n\to1\). Define \(T\colon K\to c_0\) by
\begin{equation}\label{eq:example-34-map}
T(x)=e_1+t_1^{\alpha_2}e_2+\sum_{i=2}^{\infty}t_i^{\alpha_{i+1}/\alpha_i}e_{i+1},\qquad x=(t_i)\in K.
\end{equation}
We first verify that \(T(K)\subset K\). Since
\[
\frac{\alpha_{i+1}}{\alpha_i}>1
\]
and \(0\leq t_i\leq1\), we have
\[
0\leq t_i^{\alpha_{i+1}/\alpha_i}\leq t_i.
\]
As \(t_i\to0\), it follows that $t_i^{\alpha_{i+1}/\alpha_i}\to 0$. Thus \(T(x)\in c_0\), and all its coordinates belong to \([0,1]\). Hence \(T(x)\in K\). The mapping \(T\) is fixed point free. Indeed, if \(T(x)=x\), then  $t_1=1$, $t_2=t_1^{\alpha_2}=1$, and inductively
\[
t_{i+1}=t_i^{\alpha_{i+1}/\alpha_i}=1\qquad(i\geq2).
\]
This contradicts \(x\in c_0\). The mapping is also non-Lipschitz. For \(0<t<1\), let \(x_t=te_1\). Then
\[
\|T(x_t)-T(0)\|_\infty=t^{\alpha_2},
\]
and hence
\[
\frac{\|T(x_t)-T(0)\|_\infty}{\|x_t-0\|_\infty}=t^{\alpha_2-1}\longrightarrow\infty
\]
as \(t\downarrow0\). A straightforward induction gives
\begin{equation}\label{eq:example-34-iterates}
T^n(x)=\sum_{j=1}^{n}e_j+t_1^{\alpha_{n+1}}e_{n+1}+\sum_{i=2}^{\infty}t_i^{\alpha_{n+i}/\alpha_i}e_{n+i}.
\end{equation}
Let \(x=(t_i)\) and \(y=(s_i)\) belong to \(K\), and put $d:=\|x-y\|_\infty$. For the term arising from the first coordinate, the elementary inequality
\[
|a^\theta-b^\theta|\leq|a-b|^\theta,\qquad a,b\in[0,1],\quad 0<\theta\leq1,
\]
gives
\begin{equation}\label{eq:example-34-first-coordinate}
|t_1^{\alpha_{n+1}}-s_1^{\alpha_{n+1}}|\leq |t_1-s_1|^{\alpha_{n+1}}\leq d^{\alpha_{n+1}}.
\end{equation}
For \(i\geq2\), set
\[
q_{n,i}:=\frac{\alpha_{n+i}}{\alpha_i}>1.
\]
By the mean value theorem,
\begin{align*}
|t_i^{q_{n,i}}-s_i^{q_{n,i}}|&\leq q_{n,i}|t_i-s_i|\leq\frac1{\alpha_i}|t_i-s_i|.
\end{align*}
Since \(0\leq|t_i-s_i|\leq d\leq1\),
\[
|t_i-s_i|\leq |t_i-s_i|^{\alpha_{n+1}}\leq d^{\alpha_{n+1}}.
\]
Moreover, \(\alpha_i\geq\alpha_2\) for \(i\geq2\). Therefore,
\begin{equation}\label{eq:example-34-tail-coordinate}
|t_i^{q_{n,i}}-s_i^{q_{n,i}}|\leq \frac1{\alpha_2}d^{\alpha_{n+1}}.
\end{equation}
It follows from \eqref{eq:example-34-iterates}, \eqref{eq:example-34-first-coordinate}, and \eqref{eq:example-34-tail-coordinate} that
\begin{equation}\label{eq:example-34-uniform-holder}
\|T^n(x)-T^n(y)\|_\infty\leq\frac1{\alpha_2} \|x-y\|_\infty^{\alpha_{n+1}}.
\end{equation}
Since \(\alpha_{n+1}\to1\), \(T\) is uniformly asymptotically H\"older with constant \(1/\alpha_2\).

\smallskip 

We finally show that \(T\) is not asymptotically H\"older nonexpansive. Fix \(h\in(0,1)\), and consider the vectors $u=e_1+e_2$, $v=e_1+(1-h)e_2$. Both \(u\) and \(v\) belong to \(K\), and
\[
\|u-v\|_\infty=h.
\]
The coordinates of \(u\) and \(v\) differ only at the second coordinate. Hence \eqref{eq:example-34-iterates} gives
\[
\|T^n(u)-T^n(v)\|_\infty=1-(1-h)^{\alpha_{n+2}/\alpha_2}.
\]
Since \(\alpha_{n+2}\to1\),
\begin{equation}\label{eq:example-34-expanded-limit}
\|T^n(u)-T^n(v)\|_\infty\longrightarrow 1-(1-h)^{1/\alpha_2}.
\end{equation}
Because \(1/\alpha_2>1\) and \(0<1-h<1\),
\[
(1-h)^{1/\alpha_2}<1-h,
\]
and therefore
\begin{equation}\label{eq:example-34-strict-expansion}
1-(1-h)^{1/\alpha_2}>h.
\end{equation}
Suppose, contrary to our assertion, that there existed sequences
\[
(\beta_n)\subset(0,1),\qquad(\varepsilon_n)\subset[0,\infty),\qquad\beta_n\to1,\qquad\varepsilon_n\to0,
\]
such that
\[
\|T^n x-T^n y\|_\infty\leq(1+\varepsilon_n)\|x-y\|_\infty^{\beta_n}\qquad(x,y\in K).
\]
Applying this inequality to \(u\) and \(v\) and passing to the limit would give
\[
1-(1-h)^{1/\alpha_2}\leq\lim_{n\to\infty}(1+\varepsilon_n)h^{\beta_n}=h,
\]
contradicting \eqref{eq:example-34-strict-expansion}. Thus \(T\) is not asymptotically H\"older nonexpansive. 

\smallskip 
Consequently, this example separates the class of uniformly asymptotically H\"older mappings from the smaller class of asymptotically H\"older nonexpansive mappings.
\end{example}

\begin{example}[A genuinely non-Lipschitz mapping that is asymptotically
nonexpansive away from the diagonal]
\label{ex:away-from-diagonal}
Let \(K=[0,1]\) and define \(T\colon K\to K\) by 
\[
T(0)=0,\qquad T(x)=\frac{x}{2}+\frac{x}{4}\sin\left(\frac{1}{x^{2}}\right),\qquad 0<x\leq1.
\]
Then \(T\) is continuous and
\[
\frac{x}{4}\leq T(x)\leq\frac{3x}{4},\qquad x\in[0,1].
\]
In particular, \(T(K)\subset K\), and induction gives
\begin{equation}\label{eq:orbit-decay-example}
0\leq T^n(x)\leq\left(\frac34\right)^n x\leq\left(\frac34\right)^n\qquad(x\in K).
\end{equation}
We claim that \(T\) is asymptotically nonexpansive away from the diagonal. Indeed, for every \(r>0\), if \(|x-y|\geq r\), then
\[
|T^n(x)-T^n(y)|\leq T^n(x)+T^n(y)\leq 2\left(\frac34\right)^n.
\]
Consequently,
\[
\sup_{\substack{x,y\in K\\ |x-y|\geq r}}\frac{|T^n(x)-T^n(y)|}{|x-y|}\leq\frac{2}{r}\left(\frac34\right)^n\longrightarrow0.
\]
Thus, more strongly,
\[
\lim_{n\to\infty}\sup_{\substack{x,y\in K\\ |x-y|\geq r}}\frac{|T^n(x)-T^n(y)|}{|x-y|}=0\qquad\text{for every }r>0.
\]
The example is nontrivial in the sense that \(T\) is not Lipschitz. For \(x>0\),
\[
T'(x)=\frac12+\frac14\sin\left(\frac{1}{x^2}\right)-\frac{1}{2x^2}\cos\left(\frac{1}{x^2}\right).
\]
Taking
\[
x_k=\frac{1}{\sqrt{2\pi k}},
\]
we obtain
\[
T'(x_k)=\frac12-\pi k,
\]
and hence
\[
\sup_{0<x\leq1}|T'(x)|=\infty.
\]
Therefore \(T\) is not even Lipschitz at the first iterate, despite being asymptotically contractive on every set of pairs staying a positive distance away from the diagonal.
\end{example}

\begin{remark}
This example also shows that asymptotic nonexpansiveness away from the
diagonal, by itself, does not provide the uniform Lipschitz control
required in Theorem~\ref{thm:off-diagonal-holder}.
\end{remark}

\medskip

\paragraph{Acknowledgement} The authors would like to thank their colleague Luciano G. Josino for pointing out some inaccuracies in an earlier version of the manuscript.

\medskip 

\subsection*{AI statement} The overall content of this manuscript remains largely unchanged from the first version. For this revision, the authors employed OpenAI's ChatGPT 5.6 Sol for editorial refinement, organization, and consistency  checks. Specifically, the tool was employed to clarify and expand upon the arguments in \cite{Karlovitz}, as well as to address the technical issues underlying Theorem 3.7 and Section \ref{sec:4}. All AI-assisted outputs were independently evaluated and verified by the authors, who retain full responsibility for the contents of this paper.


\end{document}